\numberwithin{equation}{section}
\newcommand{\N}{\mathbb{N}}
\newcommand{\Z}{\mathbb{Z}}
\newcommand{\Q}{\mathbb{Q}}
\newcommand{\R}{\mathbb{R}}
\newcommand{\CC}{\mathbb{C}}
\DeclareMathOperator{\Gal}{Gal}
\DeclareMathOperator{\im}{Im}
\DeclareMathOperator{\re}{Re}
\DeclareMathOperator{\SL}{SL}
\newtheorem{thm}{Theorem}[section]
\newtheorem*{thm*}{Theorem}
\newtheorem*{cor*}{Corollary}
\newtheorem{conjecture}[thm]{Conjecture}
\newtheorem{algo}[thm]{Algorithm}
\theoremstyle{definition}
\theoremstyle{remark}
\newtheorem{remark}[thm]{\bf Remark}
\begin{document}

\author[Zhengyu Tao]{Zhengyu Tao}


\address{Department of Mathematics, Nanjing University, Nanjing 210093, People's Republic of China}

\email{taozhy@smail.nju.edu.cn}

\author[Xuejun Guo]{Xuejun Guo}

\address{Department of Mathematics, Nanjing University, Nanjing 210093, People's Republic of China}

\email{guoxj@nju.edu.cn}

\author[Tao Wei]{Tao Wei}

\address{Department of Mathematics, Nanjing University, Nanjing 210093, People's Republic of China}

\email{weitao@smail.nju.edu.cn}

\title[Mahler measures and $L$-values]{Mahler measures and $L$-values of elliptic curves over real quadratic fields}

\keywords{Mahler measure; CM point; $L$-function; elliptic curve}

\keywords{Mahler measure; elliptic curve; $L$-function; Beilinson's conjecture}

\subjclass[2020]{Primary 11R06, 19F27; Secondary 14G10, 19F15}

\thanks{The authors are supported by NSFC 11971226 and NSFC 12231009}

\begin{abstract}
	A famous formula of Rodriguez Villegas shows that the Mahler measures $m(k)$ of $P_k(x,y)=x+1/x+y+1/y+k$ can be written as a Kronecker-Eisenstein series. We prove that the degree of $k$ in Villegas' formula can be bounded by the class numbers of CM points. This fact allows us to systematically derive $28$ new identities linking $m(k)$ to $L$-values of cusp forms. Guided by Beilinson's conjecture, we also prove $5$ formulas that express  $L$-values of CM elliptic curves over real quadratic fields to some $2\times 2$ determinants of $m(k)$. This extends a recent work of Guo (the second author of this paper), Ji, Liu, and Qin, in which they dealt with the cases when $k=4\pm 4\sqrt{2}$.
\end{abstract}

\maketitle

\section{Introduction}
\label{intro}


The \textit{(logarithmic) Mahler measure} of a Laurent polynomial $P\in\CC[x_1^{\pm 1},\cdots,x_n^{\pm 1}]$ is defined by

\[m(P)=\int_0^1\cdots\int_0^1\log|P(e^{2\pi i\theta_1},\cdots,e^{2\pi i\theta_n})|d\theta_1\cdots d\theta_n.\]
There are many connections between the Mahler measures of certain polynomial families associated to elliptic
curves and special values of their $L$-functions. For example, by using Beilinson's regulator map,  Deninger \cite{Den97} conjectured that
\begin{equation}\label{Deningerconjecture}
	m\left(x+\frac{1}{x}+y+\frac{1}{y}+1\right)=L'(E_1,0)=\frac{15}{4\pi^2}L(E_1,2),
\end{equation}
where $E_1$ is the elliptic curve of conductor $15$ given by the projective closure of
\[x+\frac{1}{x}+y+\frac{1}{y}+1=0.\]

Equation \eqref{Deningerconjecture} is the first conjectural formula that connects Mahler measure and $L$-function of elliptic curve. Note that $E_1$ does not have CM. Hence, it is essentially difficult to prove  \eqref{Deningerconjecture}. Almost two decades after it was proposed, this conjecture was finally proved by Rogers and Zudilin in \cite{RZ14}.

In this paper, we study the Mahler measures of  the polynomial family
\[P_k(x,y)=x+\frac{1}{x}+y+\frac{1}{y}+k, \]
where  $k\in\CC\setminus\{0,\pm 4\}$. One can easily check that the rational transformation
\begin{equation}\label{rationaltransformation}
	x=\frac{kX-2Y}{2X(X-1)},\quad y=\frac{kX+2Y}{2X(X-1)}
\end{equation}
converts the zero locus of $P_k(x,y)=0$ to an elliptic curve $E_k$ with Weierstrass equation
\begin{equation}\label{Weierstrassform}
	Y^2=X^3+\left(\frac{k^2}{4}-2\right)X^2+X.
\end{equation}
Note that $E_k$ is defined over $\Q$ if $k^2\in\Q$. The inverse of transformation \eqref{rationaltransformation} is
\begin{equation}\label{inverserationaltransformation}
	X=-\frac{1}{xy},\quad Y=\frac{(y-x)(1+xy)}{2x^2y^2}.
\end{equation}

For simplicity, we write $m(k)=m(P_k)$ in the rest of this paper. Motivated by Deninger's conjecture, Boyd \cite{Boyd98} did a lot of numerical calculations and conjectured that for many $k\in \Z\setminus\{0,\pm 4\}$, $m(k)$ is a nonzero rational multiple of $L'(E_k,0)$, i.e.
\begin{equation}\label{mahlerandelliptic}
	m(k)=r_kL'(E_k,0)
\end{equation}
for some $r_k\in\Q^*$. Boyd listed his conjectural identities in a table of \cite{Boyd98}, he numerically verified those identities with at least 25 decimal places of precision. The famous modularity theorem tells us that if $E/\Q$ is an elliptic curve of conductor $N$, then there exists a newform $f\in\mathcal S_2(\Gamma_0(N))$ such that
\[L(E,s)=L(f,s).\]
According to the functional equation satisfied by $L(E,s)$, we have
\[L'(E,0)=\pm\frac{N}{4\pi^2}L(E,2).\]
Thus we can rewrite \eqref{mahlerandelliptic} as
\begin{equation}\label{mahlermeasureasL2}
	m(k)=r_kL'(f_N,0)=r_k\frac{N}{4\pi^2}L(f_N,2),
\end{equation}
where $f_N\in\mathcal S_2(\Gamma_0(N))$ is the newform associated to $E_k$.

Recall that the classical Dedekind eta-function is the infinite product
\[\eta(\tau)=e^{\frac{2\pi i\tau}{24}}\prod\limits_{n=1}^{\infty}(1-q^{n}),\]
where $\tau\in\mathcal H=\{z\in\CC|\im(z)>0\}$ and $q=e^{2\pi i\tau}$. Let $\chi_{-d}=\left(\frac{-d}{n}\right)$ be the real odd Dirichlet character of conductor $d$ given by the Kronecker-Jacobi symbol $\left(\frac{\cdot}{\cdot}\right)$. The modular lambda-function $\lambda(\tau)$ is defined by
\begin{equation}\label{lambda}
	\lambda(\tau)=16\frac{\eta(\tau/2)^8\eta(2\tau)^{16}}{\eta(\tau)^{24}}.
\end{equation}
In \cite{Vil97}, Villegas proved the following result that express $m(k)$ as a Kronecker-Eisenstein series:
\begin{thm}\label{mahlermeasureaslambda}
	If $\tau$ lies in the region $\mathcal F'\subset\mathcal H$ formed by the geodesic triangle of vertices $ i\infty,0,1/2$ and its reflection along the imaginary axis, then we have
	\begin{equation}\label{mahlermeasureandlambda}
		m(k)=\re \left(\frac{16\im(\tau)}{\pi^2}\underset{m,n\in\Z}{{\sum}'}\frac{\chi_{-4}(n)}{(4m\tau+n)^2(4m\bar\tau+n)}\right),
	\end{equation}
	where $k=\frac{4}{\sqrt{\lambda(2\tau)}}$ and $\underset{m,n\in\Z}{{\sum}'}$ means $(0,0)$ is excluded from the summation.
\end{thm}

Then, by taking $\tau$ to be some appropriate CM points (quadratic algebraic numbers in $\mathcal H$) in \eqref{mahlermeasureandlambda}, he proved that
\begin{equation}\label{Vliegasexamples}
	m(4\sqrt{2})=L'(E_{4\sqrt{2}},0),\ \ \ m(2\sqrt{2})=L'(E_{2\sqrt{2}},0),\ \ \ m(4 i)=2L'(E_{4 i},0).
\end{equation}

Although much research has been done, only a few $k$ with $E_k$ defined over $\Q$ are rigorously proved to satisfy \eqref{mahlerandelliptic}. Samart made a comprehensive table of these $k$ in \cite{Sam21}, they are
\[k=1,3 i,5,16, i,3,2,3\sqrt{2},8, i\sqrt{2},4 i,2\sqrt{2},2 i,12,\sqrt{2},4\sqrt{2}.\]
He also listed the conductors of each $E_k$, the rational numbers $r_k$, and their references. See \cite{Bru16, Lal10,LR07,LSZ16,RZ14,Sam15,Zud14} for more about this subject.

One may wonder what will happen if $E_k$ is defined over a number field $K$ with $r=[K:\Q]>1$. In fact, by Theorem \ref{mahlermeasureaslambda}, if there is a CM point $\tau\in\mathcal{F}'$ such that $k=\frac{4}{\sqrt{\lambda(2\tau)}}$, then we can still write $m(k)$ as a lattice sum, and thus it can also be written as $L$-value of a modular form. However, in order to relate $m(k)$ to the $L$-value of $E_k$, we need to turn to a $r\times r$ determinant with $m(k)$ as its entries according to Beilinson's conjecture.

For example, in the real quadratic case $k=4\pm 4\sqrt{2}$, by taking $\tau=\frac{i\sqrt{2}}{2}$ and $i\sqrt{2}$ into \eqref{mahlermeasureandlambda}, Guo, Ji, Liu, and Qin proved in \cite{Guo} that
\begin{equation}\label{MMascuspform}
	m(4+4\sqrt{2})=\frac{16\sqrt{2}}{\pi^2}L(f,2),\quad m(4-4\sqrt{2})=\frac{16\sqrt{2}}{\pi^2}L(g,2),
\end{equation}
where $f(\tau)=\frac{\eta(8\tau)\eta(16\tau)^5}{\eta(32\tau)^2}$ and $g=2\frac{\eta(8\tau)^3\eta(32\tau)^2}{\eta(16\tau)}$ are weight $2$ cusp forms for $\Gamma_1(64)$. Then, by constructing the associated Beilinson regulator, they proved that
\begin{equation}\label{theGJLQcase}
	\left|\det{\begin{pmatrix}m(4+4\sqrt{2})& m(4-4\sqrt{2})\\
			m(4-4\sqrt{2})&-m(4+4\sqrt{2})\end{pmatrix}}\right|=\frac{512}{\pi^4}L(E_{4\pm4\sqrt{2}},2).
\end{equation}

The present paper is intended to extend the work of \cite{Guo}. Since $\lambda(2\tau)$ is a modular function for $\Gamma_0(4)$ with rational Fourier coefficients. It is known that $\lambda(2\tau)$ are algebraic numbers when $\tau$ takes CM points. Moreover, the degree of $\lambda(2\tau)$ have something to do with the class number of $D_\tau$, the discriminant of $\tau$. We will explore the connection between them and this will help us find some CM points that make $\lambda(2\tau)$ look good. Then we will use these points to prove identities similar to \eqref{MMascuspform}. We summarize the results in the following theorem.
\begin{thm}\label{MMasLofcuspforms}
	We have identities of the form
	\[m(k)=\frac{c_k}{\pi^2}L(f_k,2)\]
	for the following $28$ $k$:
	\[12\pm 8\sqrt{2},\quad 8i\sqrt{3\sqrt{2}+4},\quad 8\sqrt{3\sqrt{2}-4},\quad 2i\sqrt{2\sqrt{2}-2},\quad 2\sqrt{2\sqrt{2}+2}\]
	\[4i\sqrt{2\sqrt{2}+2},\quad 4\sqrt{2\sqrt{2}-2},\quad (8\pm4\sqrt{2})i,\quad \sqrt{2}\pm\sqrt{6},\quad 4\sqrt{2}\pm4\sqrt{6},\quad 2\sqrt{3}\pm 2i\]
	\[(32\sqrt{2}\pm 12\sqrt{7})i,\quad \frac{3\sqrt{2}}{2}\pm\frac{\sqrt{14}}{2},\quad 24\sqrt{2}\pm8\sqrt{14},\quad 6\pm 2i\sqrt{7},\quad \frac{3}{2}\pm i\frac{\sqrt{7}}{2},\quad \frac{3\sqrt{7}}{2}\pm\frac{i}{2}.\]
	Where $f_k$ are normalized weight $2$ cusp forms for $\Gamma_0(N_k)$ of the form
	\[r\underset{m,n\in\Z}{{\sum}}\chi_{-4}(n)(lm+sn)q^{am^2+bmn+cn^2},\]
	$c_k$ are quadratic numbers such that $c_k^2\in\Q$. To save space, we list the explicit expressions of $f_k$, the levels $N_k$ and $c_k$ in Table \ref{mahlermeasureandmodularforms}.
\end{thm}

When $E_k$ is defined over a number field $K$, the approach of Guo et al. used to construct the Beilinson regulator requires that the rational transformation \eqref{rationaltransformation} is also defined over $K$. One can check that it is the case for $k=12\pm 8\sqrt{2}$. So we can follow their approach and prove the following formula.

\begin{thm}\label{mainresults1}
	We have
	\begin{equation}\label{thecasek=12pm8sqrt2}
		\left|\det{\begin{pmatrix}m(12+8\sqrt{2})& m(12-8\sqrt{2})\\
				m(12-8\sqrt{2})&m(12+8\sqrt{2})\end{pmatrix}}\right|=\frac{1024}{\pi^4}L(E_{12\pm8\sqrt{2}},2).
	\end{equation}
\end{thm}
For the cases when $k=\sqrt{2}\pm\sqrt{6},\ 4\sqrt{2}\pm4\sqrt{6},\ \frac{3\sqrt{2}}{2}\pm\frac{\sqrt{14}}{2},\ 24\sqrt{2}\pm8\sqrt{14}.$ Although $E_k$ will still be defined over real quadratic fields. The rational transformations \eqref{rationaltransformation} are no longer defined over the same field. However, by changing the Weierstrass model of $P_k(x,y)=0$ to
\begin{equation*}
	E_k':Y^2=X^3+\left(\frac{k^2}{2}-4\right)X^2+4X,
\end{equation*}
we can still prove $2\times 2$ formulas for these $k$:
\begin{thm}\label{mainresults2}
	Let $E_k'$ be the curve defined above, then we have
	\begin{equation}\label{thecasek=sqrt2pmsqrt6}
		\left|\det{\begin{pmatrix}m(\sqrt{2}+\sqrt{6})& m(\sqrt{2}-\sqrt{6})\\
				3m(\sqrt{2}-\sqrt{6})&-m(\sqrt{2}+\sqrt{6})\end{pmatrix}}\right|=\frac{144}{\pi^4}L(E'_{\sqrt{2}\pm\sqrt{6}},2),
	\end{equation}
	\begin{equation}\label{thecasek=4sqrt2pm4sqrt6}
		\left|\det{\begin{pmatrix}
				m(4\sqrt{2}+4\sqrt{6}) & m(4\sqrt{2}-4\sqrt{6}) \\
				m(4\sqrt{2}-4\sqrt{6}) & -3m(4\sqrt{2}+4\sqrt{6})\end{pmatrix}}\right|=\frac{2304}{\pi^4}L(E'_{4\sqrt{2}\pm4\sqrt{6}},2),
	\end{equation}
	\begin{equation}\label{thecasek=3sqrt2/2pmsqrt14/2}
		\left|\det{\begin{pmatrix}
				m\big(\frac{3\sqrt{2}}{2}+\frac{\sqrt{14}}{2}\big) & m\big(\frac{3\sqrt{2}}{2}-\frac{\sqrt{14}}{2}\big) \\
				7m\big(\frac{3\sqrt{2}}{2}-\frac{\sqrt{14}}{2}\big) & -m\big(\frac{3\sqrt{2}}{2}+\frac{\sqrt{14}}{2}\big)\end{pmatrix}}\right|=\frac{196}{\pi^4}L(E'_{\frac{3\sqrt{2}}{2}\pm\frac{\sqrt{14}}{2}},2),
	\end{equation}
	\begin{equation}\label{thecasek=24sqrt2pm8sqrt14}
		\left|\det{\begin{pmatrix}
				m(24\sqrt{2}+8\sqrt{14}) & m(24\sqrt{2}-8\sqrt{14}) \\
				m(24\sqrt{2}-8\sqrt{14}) & -7m(24\sqrt{2}+8\sqrt{14})\end{pmatrix}}\right|=\frac{12544}{\pi^4}L(E'_{24\sqrt{2}\pm8\sqrt{14}},2).
	\end{equation}
\end{thm}

This paper is organized as follows.

In Section \ref{CNADOL}, we study the modular function $\lambda(2\tau)$ for $\Gamma_0(4)$. It is proved that when $\tau_0$ is a CM point with discriminant $D_{\tau_0}$, then $\lambda(2\tau_0)$ will be an algebraic number with degree no more than $h(D_{\tau_0})h(D_{4\tau_0})$, where $h(D)$ is the class number of primitive binary quadratic forms with discriminant $D$. 

In Section \ref{searchcmpoints}, we will use all negative discriminants $D$ with $h(D)\leqslant 2$ and an algorithm to obtain a list (see Table \ref{cmpoints}) of CM points in $\mathcal F'$ with $h(D_{\tau_0})h(D_{4\tau_0})\leqslant 4$ and distinct $\lambda(2\tau_0)$ values. Our idea is partly inspired by the work \cite{Piseries} of  Huber, Schultz, and Ye, in which they searched CM points to derive many Ramanujan-Sato series for $1/\pi$ of level $20$.

In Section \ref{tableofmahlermeasure}, we use the CM points in Table \ref{cmpoints} to prove Theorem \ref{MMasLofcuspforms}. Instead of providing the details for all cases, we only prove the case when $k=12\pm 8\sqrt{2}$ as an illustration and list other results directly in Table \ref{mahlermeasureandmodularforms}. Once we know the CM points, the other cases can be proved in the same way. In addition to identities in Theorem \ref{MMasLofcuspforms}, we also find that Table \ref{mahlermeasureandmodularforms} reproduces all proven cases of $m(k)$ when $E_k$ have CM.

In Section \ref{Beilinson}, we briefly introduce Beilinson's conjecture for curves over number fields. Our references are \cite{DJZ06,LJ15}.

In Section \ref{example12pm8sqrt(2)}, we prove Theorem \ref{mainresults1}. Vélu's formula will be used to construct the isogeny between $E_{12+8\sqrt{2}}$ and $E_{12-8\sqrt{2}}$. This will help us to determine the shape of Beilinson regulator which is a $2\times2$ determinant of $m(12\pm 8\sqrt{2})$. When it comes to the $L$-functions of $E_{12\pm 8\sqrt{2}}$, the LMFDB database \cite{database} helps a lot.

In Section \ref{other4cases}, we prove Theorem \ref{mainresults2}. When dealing with these cases, we need to turn to the Weierstrass model $E'_k$.

The computations were mainly performed in \textsf{Mathematica}. We also used \textsf{SageMath} and \textsf{PARI/GP} for elliptic curve-related calculations.

\section{The degree of $\lambda(2\tau)$}\label{CNADOL}

Since according to \eqref{lambda}, $\lambda(2\tau)$ has the eta-quotient expression
\[\lambda(2\tau)=16\frac{\eta(\tau)^8\eta(4\tau)^{16}}{\eta(2\tau)^{16}}=16q-128q^2+704q^3-3072q^4+\cdots,\]
we can deduce (for example, from \cite[Proposition 5.9.2]{Mfaca}) that $\lambda(2\tau)$ is a modular function for $\Gamma_0(4)$.

By the theory of
complex multiplication, we know that every modular function for $\Gamma_0(m)$ is a rational function of $j(\tau)$ and $j(m\tau)$, where
\[j(\tau)=\frac{1}{q}+744+196884q+21493760q^2+\cdots\]
is the modular invariant. To make it more clear, let
\[C(m)=[\SL(2,\Z):\Gamma_0(m)]=m\prod_{p|m}\left(1+\frac{1}{p}\right),\]
and $\gamma_i,\ i=1,\cdots,C(m)$ be the right coset representatives of $\Gamma_0(m)$ in $\SL(2,\Z)$.
For each $m$, there exists a polynomial (usually called the \textit{modular equation} for $\Gamma_0(m)$)
\[\Phi_m(X,Y)\in\Z[X,Y]\]
of degree $C(m)$ that satisfies
\begin{equation}\label{PHIm}
	\Phi_m(X,j(\tau))=\prod_{i=1}^{C(m)}(X-j(m\gamma_i\tau)).
\end{equation}
\begin{thm}[{\cite[Proposition 12.7]{Cox}}]\label{lambda2}
	Let $f(\tau)$ be a modular function for $\Gamma_0(m)$ whose $q$-expansion has rational coefficients. Then
	\begin{enumerate}
		\item $f(\tau)\in\Q(j(\tau),j(m\tau))$.
		\item Assume in addition that $f(\tau)$ is holomorphic on $\mathcal{H}$, and let $\tau_0\in\mathcal{H}$. If
		\[\frac{\partial\Phi_m}{\partial X}(j(m\tau_0),j(\tau_0))\neq 0,\]
		then $f(\tau_0)\in \Q(j(\tau_0),j(m\tau_0)).$
	\end{enumerate}
\end{thm}

Now, back to the case $m=4$ that we are interested in, we have $[\SL(2,\Z):\Gamma_0(4)]=6$. Let $S=\begin{pmatrix}
	0 & -1\\
	1 & 0
\end{pmatrix}, T=\begin{pmatrix}
	1 & 1\\
	0 & 1
\end{pmatrix}$ be the generators of $\SL(2,\Z)$. One can verify that
\begin{equation}\label{rightcosets}
	\gamma_1=I_2,\ \gamma_2=S,\ \gamma_3=ST,\ \gamma_4=ST^{-1},\ \gamma_5=ST^{-2},\ \gamma_6=ST^{-2}S
\end{equation}
is a set of right coset representatives of $\Gamma_0(4)$ in $\SL(2,\Z)$.

For a negative integer $D$ with $D\equiv 0 \text{ or } 1 \mod 4$, let $h(D)$ be the class number of primitive binary quadratic forms of
discriminant $D$. By a \textit{CM point}, we mean a point $\tau_0\in \mathcal{H}$ that satisfies $a\tau_0^2+b\tau_0+c=0$, where $a,b,c$ are integers with $a>0$ and $\gcd(a,b,c)=1.$ Denote $ D_{\tau_0}:=b^2-4ac$ be the \textit{discriminant} of $\tau_0$. It is convenient to write $\tau_0$ as $(a,b,c)$.

\begin{thm}\label{boundbyclassnumber}
	Let $\tau_0\in\mathcal{H}$ be a CM point. If
	\[\prod_{i\neq 1}\big(j(4\tau_0)-j(4\gamma_i\tau_0)\big)\neq 0,\]
	where $\gamma_i$ are the coset representatives in \eqref{rightcosets}, then $\lambda(2\tau_0)$ is an algebraic number with degree no more than $h(D_{\tau_0})h(D_{4\tau_0})$.
\end{thm}
\begin{proof}
	It is clear that $\displaystyle \lambda(2\tau)=16\frac{\eta(\tau)^8\eta(4\tau)^{16}}{\eta(2\tau)^{16}}$ is holomorphic on $\mathcal{H}$ since
	\[\eta(\tau)=e^{\frac{2\pi i\tau}{24}}\prod\limits_{n=1}^{\infty}(1-q^{n})\]
	has no zeros on $\mathcal{H}$. Take the derivative of both sides of \eqref{PHIm} with respect to $X$ and evaluate at $\tau=\tau_0$, we have
	\[\frac{\partial\Phi_4}{\partial X}(j(4\tau_0),j(\tau_0))=\prod_{i\neq 1}\big(j(4\tau_0)-j(4\gamma_i\tau_0)\big).\]
	Thus, when the above equation is not zero, we know form Theorem \ref{lambda2} that \[\lambda(2\tau_0)\in\Q(j(\tau_0),j(4\tau_0)).\]
	Since $j(\tau_0)$ is an algebraic integer of degree $h(D_{\tau_0})$ by the theory of complex multiplication, we have
	\begin{align*}
		[\Q(\lambda(2\tau_0)):\Q]&\leqslant [\Q(j(\tau_0),j(4\tau_0)):\Q]\\
		&= [\Q(j(\tau_0),j(4\tau_0)):\Q(j(4\tau_0))][\Q(j(4\tau_0)):\Q]\\ 
		&\leqslant[\Q(j(\tau_0)):\Q][\Q(j(4\tau_0)):\Q]\\
		&=h(D_{\tau_0})h(D_{4\tau_0}).
	\end{align*}
\end{proof}

\section{CM points with $h(D_{\tau_0})h(D_{4\tau_0})\leqslant 4$}\label{searchcmpoints}

Let $a,b,c\in\Z$ with $a>0, \gcd(a,b,c)=1$. If the quadratic form $f(x,y)=ax^2+bxy+cy^2$ is positive definite, then the discriminant $D=b^2-4ac$ of $f$ satisfies
\[D<0,\quad D\equiv0\text{ or }1\mod 4.\]
We can factorize $f$ as the form
\[f(x,y)=a(x-\tau_0y)(x-\bar{\tau}_0y)\]
for a unique CM point $\tau_0\in\mathcal H.$ It can be shown that $f$ is a reduced form (i.e. $|b|\leqslant a\leqslant c$, and $b\geqslant 0$ if either $|b|=a$ or $a=c$) if and only if $\tau_0$ lies in the fundamental domain
\[\mathcal{F}=\{\tau\in\mathcal H\mid|\re(\tau)|\leqslant 1/2,|\tau|\geqslant 1, \text{and }\re(\tau)\geqslant 0\text{ if }|\re(\tau)|= 1/2\text{ or }|\tau|= 1\}\]
of $\SL(2,\Z)$. It is well known that there are exactly $h(D)$ CM points of discriminant $D$ in $\mathcal F$, each corresponds to a reduced quadratic form with discriminant $D$.

Recall that a discriminant $D<0$ is said to be \textit{fundamental} if there is an imaginary quadratic field $K=\Q(\sqrt{-N})$ with $N\geqslant 1$ and square-free, such that $D=d_K$, where
\[d_K=\begin{cases}
	N, &\text{if}\  N\equiv 1\mod 4,\\
	4N, &\text{otherwise}
\end{cases}\]
is the field discriminant of $K$. It is a famous result that Gauss conjectured and K. Heegner and H. M. Stark proved in \cite{Heegner}  and \cite{Stark} that the only negative fundamental discriminants $d_K$ with $h(d_K)=1$ are
\begin{equation}\label{classnumber1}
	d_K=-3,-4,-7,-8,-11,-19,-43,-67,-163.
\end{equation}
Later, Stark also determined all fundamental discriminants of class number $2$ in \cite{Stark2}, they are
\begin{align}\label{classnumber2}
	d_K=&-15,-20,-24,-35,-40,-51,-52,-88,-91,-115,-123,-148, \\
	&-187,-232,-235,-267,-403,-427.\notag
\end{align}

Recall that an \textit{order} $\mathcal O$ in a quadratic field $K$ is a $\Z$-submodule of the form 
\[\mathcal O=\Z+f\mathcal O_K,\]
where $\mathcal O_K$ is the ring of integers of $K$ and $f=[\mathcal O_K:\mathcal O]$ is the \textit{conductor} of $\mathcal O$. In particular, $\mathcal O_K$ is the maximal order of conductor $1$. The discriminant of $\mathcal O$ is defined by $f^2d_K$, and we have \cite[Corollary 7.28]{Cox}
\begin{equation}\label{orderformula}
	h(f^2d_K)=\frac{h(d_K)f}{[\mathcal{O}_K^*:\mathcal{O}^*]}\prod_{p\mid f}\left(1-\left(\frac{d_K}{p}\right)\frac{1}{p}\right),
\end{equation}

By using \eqref{orderformula}, together with \eqref{classnumber1} and \eqref{classnumber2}, we can determine all the discriminants $D$ such that $h(D)\leqslant 2$. To achieve this, consider the unit groups of imaginary quadratic fields $K=\Q(\sqrt{-N})$
\[\mathcal O^*_K=\begin{cases}
	\{\pm 1, \pm i\}, &\text{if}\  N= -1,\\
	\{\pm 1,\frac{1}{2}(\pm 1\pm i\sqrt{3})\}, &\text{if}\  N= -3,\\
	\{\pm 1\}, &\text{otherwise.}
\end{cases}\]
First, suppose that $\mathcal O^*_K=\{\pm 1\}$. If $f>6$, then
\[f\prod_{p\mid f}\left(1-\left(\frac{d_K}{p}\right)\frac{1}{p}\right)>2,\]
and \eqref{orderformula} tells us that the only possible $(d_K,f)$ such that
\[\begin{cases}
	d_K\neq -3\text{ or }-4,\\
	f>1,\\
	h(f^2d_K)\leqslant 2,
\end{cases}\]
are
\[(-7,2),(-8,2),(-15,2),(-8,3),(-11,3),(-7,4).\]
Similarly, when $d_k=-3\text{ or }-4$, the only possible $(d_K,f)$ such that $f>1$ and $h(f^2d_K)\leqslant 2$ are
\[(-3,2),(-4,2),(-3,3),(-4,3),(-3,4),(-4,4),(-3,5),(-4,5),(-3,7).\]
This completes our goal.
\begin{thm}\label{classnumber<=2}
	Let $D\equiv0\text{ or }1\mod 4$ be negative. Then
	\begin{enumerate}
		\item $h(D)=1$ if and only if $D$ is one of the following numbers:
		\[-3,-4,-7,-8,-11,-12,-16,-19,-27,-28,-43,-67,-163.\]
		\item $h(D)=2$ if and only if $D$ is one of the following numbers:
		\begin{align*}
			&-15,-20,-24,-32,-35,-36,-40,-48,-51,-52,-60,-64,-72,-75,-88,-91,\\
			&-99,-100,-112,-115,-123,-147,-148,-187,-232,-235,-267,-403,-427.
		\end{align*}
	\end{enumerate}
\end{thm}

Let $S,T$ be the generators of $\SL(2,\Z)$ as in Section \ref{CNADOL}. Note that
\begin{equation}\label{RIGHTCOSETS}
	\mathcal F \cup S\mathcal F\cup ST\mathcal F\cup ST^{-1}\mathcal F\cup ST^2\mathcal F \cup ST^{-2}\mathcal F\cup ST^2S\mathcal F\cup ST^{-2}S\mathcal F
\end{equation}
covers the region $\mathcal F'$ in Theorem \ref{mahlermeasureaslambda}; see Figure \ref{picofcovering}. We can now apply Algorithm \ref{Searchcmpoints} to get a list of CM points $\tau_0\in\mathcal F'$ such that $h(D_{\tau_0})h(D_{4\tau_0})\leqslant 4$ and have distinct values of $\lambda(2\tau_0)$ (by numerical calculation).

\begin{algo}\label{Searchcmpoints}
	\ 

	\textsc{Input}: The discriminants in Theorem \ref{classnumber<=2}.
	
	\textsc{Output}: A list (Table \ref{cmpoints}) of CM points in $\mathcal F'$ with $h(D_{\tau_0})h(D_{4\tau_0})\leqslant 4$.
	\begin{enumerate}
		\item For each $D$ in Theorem \ref{classnumber<=2}, determine all CM points of discriminant $D$ in the fundamental domain $\mathcal F$ of $\SL(2,\Z)$.
		\item Determine a full list of CM points that lie in $\mathcal F'$ with $h(D_{\tau0})\leqslant 2$ by using $S,ST,ST^{-1},$ $ST^2,ST^{-2},ST^2S,ST^{-2}S$ (they come from the covering \eqref{RIGHTCOSETS}) to translate the CM points obtained by (1).
		\item For each point $\tau_0$ obtained by (2). If $h(D_{\tau_0})h(D_{4\tau_0})\leqslant 4$, then calculate $\lambda(2\tau_0)$ numerically and take a CM point for each different value to make Table \ref{cmpoints}.
	\end{enumerate}
\end{algo}

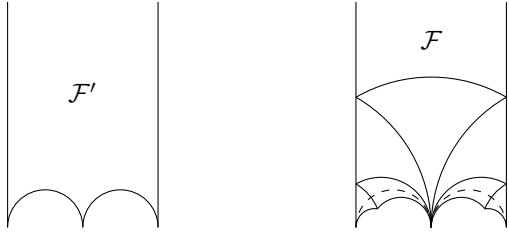
\begin{figure}[h]
	\centering
	\subfigure{\begin{tikzpicture}
			\draw (-1,0) -- (-1,3);
			\draw (1,0) -- (1,3);
			\draw (0,0) arc(0:180:0.5);
			\draw (0,0) arc(180:0:0.5);
			\node at (90:1.8) {$\mathcal F'$};
	\end{tikzpicture}}\hspace{2.5cm}
	\subfigure{\begin{tikzpicture}
			\draw (-1,0) -- (-1,3);
			\draw (1,0) -- (1,3);
			\draw[dashed] (0,0) arc(0:180:0.5);
			\draw[dashed] (0,0) arc(180:0:0.5);
			\draw (1,1.732) arc(60:120:2);
			\node at (90:2.5) {$\mathcal F$};
			\draw (0,0) arc(0:60:2);
			\draw (0,0) arc(180:120:2);
			\draw (0,0) arc(0:120:0.6666);
			\draw (0,0) arc(180:60:0.6666);
			\draw (0,0) arc(180:38.2132:0.4);
			\draw (0,0) arc(0:141.787:0.4);
			\draw (1,0.5773) arc(120:158:0.6666);
			\draw (-1,0.5773) arc(60:22:0.6666);
			\draw (1,0) arc(0:99:0.25);
			\draw (-1,0) arc(180:81:0.25);
	\end{tikzpicture}}
	\caption{The region $\mathcal{F}'$ in Theorem \ref{mahlermeasureaslambda} and its covering \eqref{RIGHTCOSETS}.}\label{picofcovering}
\end{figure}

\begin{table}
	\caption{The outputs of Algorithm \ref{Searchcmpoints}}\label{cmpoints}
	\scalebox{0.95}{
	\begin{tabular}{lccl}
		\hline
		\makebox[3cm][l]{\gape{\ \ \ \ $\tau_0$}} & \makebox[3cm][c]{\gape{$h(D_{2\tau_0})$}} & \makebox[3cm][c]{\gape{$h(D_{\tau_0})h(D_{4\tau_0})$}} & \makebox[6cm][c]{\gape{$\lambda(2\tau_0)$ (take $5$ valid digits)}} \\
		\hline
		$(2,-2,1)$ & $1$ & $1$ & \hspace{1.2cm}$-1.0000$ \\
		$(4,0,1)$ & $1$ & $1$ & \hspace{1.2cm}$0.50000$ \\
		$(8,-4,1)$ & $1$ & $1$ & \hspace{1.2cm}$2.0000$ \\
		$(16,16,5)$ & $1$ & $2$ & \hspace{1.2cm}$-32.970$ \\
		$(16,0,1)$ & $1$ & $2$ & \hspace{1.2cm}$0.97056$ \\
		$(1,0,1)$ & $1$ & $2$ & \hspace{1.2cm}$0.029437$ \\
		$(5,-4,1)$ & $1$ & $2$ & \hspace{1.2cm}$33.970$ \\
		$(4,-4,5)$ & $1$ & $4$ & \hspace{1.2cm}$-0.030330$ \\
		$(20,-4,1)$ & $1$ & $4$ & \hspace{1.2cm}$1.03033$ \\
		$(8,8,3)$ & $1$ & $2$ & \hspace{1.2cm}$-4.8284$ \\
		$(8,0,1)$ & $1$ & $2$ & \hspace{1.2cm}$0.82842$ \\
		$(2,0,1)$ & $1$ & $2$ & \hspace{1.2cm}$0.17157$ \\
		$(6,4,1)$ & $1$ & $2$ & \hspace{1.2cm}$5.8284$ \\
		$(4,4,3)$ & $1$ & $4$ & \hspace{1.2cm}$-0.20710$ \\
		$(12,4,1)$ & $1$ & $4$ & \hspace{1.2cm}$1.20710$ \\
		$(3,3,1)$ & $1$ & $2$ & \hspace{1.2cm}$-13.928$ \\
		$(1,1,1)$ & $1$ & $2$ & \hspace{1.2cm}$-0.071796$ \\
		$(16,4,1)$ & $1$ & $2$ & \hspace{1.2cm}$1.07179$ \\
		$(16,12,3)$ & $1$ & $2$ & \hspace{1.2cm}$14.928$ \\
		$(4,0,3)$ & $1$ & $4$ & \hspace{1.2cm}$0.066987$ \\
		$(12,0,1)$ & $1$ & $4$ & \hspace{1.2cm}$0.93301$ \\
		$(4,2,1)$ & $1$ & $1$ & \hspace{1.2cm}$0.50000-0.86602 i$ \\
		$(4,-2,1)$ & $1$ & $1$ & \hspace{1.2cm}$0.50000+0.86602 i$ \\
		$(7,7,2)$ & $1$ & $2$ & \hspace{1.2cm}$-253.99$ \\
		$(1,1,2)$ & $1$ & $2$ & \hspace{1.2cm}$-0.0039370$ \\
		$(32,4,1)$ & $1$ & $2$ & \hspace{1.2cm}$1.0039$ \\
		$(32,28,7)$ & $1$ & $2$ & \hspace{1.2cm}$254.99$ \\
		$(4,0,7)$ & $1$ & $4$ & \hspace{1.2cm}$0.0039216$ \\
		$(28,0,1)$ & $1$ & $4$ & \hspace{1.2cm}$0.99607$ \\
		$(2,1,1)$ & $1$ & $1$ & \hspace{1.2cm}$0.031250-0.24803 i$ \\
		$(2,-1,1)$ & $1$ & $1$ & \hspace{1.2cm}$0.031250+0.24803 i$ \\
		$(4,3,1)$ & $1$ & $1$ & \hspace{1.2cm}$0.50000-3.9686 i$ \\
		$(4,-3,1)$ & $1$ & $1$ & \hspace{1.2cm}$0.50000+3.9686 i$ \\
		$(8,2,1)$ & $1$ & $1$ & \hspace{1.2cm}$0.96875-0.24803 i$ \\
		$(8,-2,1)$ & $1$ & $1$ & \hspace{1.2cm}$0.96875+0.24803 i$ \\
		$(4,-1,1)$ & $2$ & $4$ & \hspace{1.2cm}$0.50000+0.30096 i$ \\
		$(4,1,1)$ & $2$ & $4$ & \hspace{1.2cm}$0.50000-0.30096 i$ \\
		$(8,7,2)$ & $2$ & $4$ & \hspace{1.2cm}$0.50000-27.411 i$ \\
		$(8,-7,2)$ & $2$ & $4$ & \hspace{1.2cm}$0.50000+27.411 i$ \\
		$(2,1,2)$ & $2$ & $4$ & \hspace{1.2cm}$0.00066519-0.036468 i$ \\
		$(2,-1,2)$ & $2$ & $4$ & \hspace{1.2cm}$0.00066519+0.036468 i$ \\
		$(6,3,1)$ & $2$ & $4$ & \hspace{1.2cm}$1.4680-0.88368 i$ \\
		$(6,-3,1)$ & $2$ & $4$ & \hspace{1.2cm}$1.4680+0.88368 i$ \\
		$(8,6,3)$ & $2$ & $4$ & \hspace{1.2cm}$-0.46808-0.88368 i$ \\
		$(8,-6,3)$ & $2$ & $4$ & \hspace{1.2cm}$-0.46808+0.88368 i$ \\
		$(16,2,1)$ & $2$ & $4$ & \hspace{1.2cm}$0.99933-0.036468 i$ \\
		$(16,-2,1)$ & $2$ & $4$ & \hspace{1.2cm}$0.99933+0.036468 i$ \\
		\hline
	\end{tabular}}
\end{table}

\section{Mahler Measures and $L$-values of modular forms}\label{tableofmahlermeasure}

To illustrate how the data in Table \ref{cmpoints} allows us to write Mahler measures as $L$-values of modular forms, we calculate two examples in detail, they are $m\left(12\pm 8\sqrt{2}\right)$.


Before working out these two examples, recall that the modular Weber functions (see, for example \cite[\S 12]{Cox}) are 

\[\mathfrak f(\tau)=q^{\frac{1}{24}}\frac{\eta((\tau+1)/2)}{\eta(\tau)},\ \ \mathfrak f_1(\tau)=\frac{\eta(\tau/2)}{\eta(\tau)},\ \ \mathfrak{f}_2(\tau)=\sqrt{2}\frac{\eta(2\tau)}{\eta(\tau)}.\]
They satisfy the following identities:
\begin{align}
	&\mathfrak f(\tau)\mathfrak f_1(\tau)\mathfrak f_2(\tau)=\sqrt{2},\ \ \mathfrak f_1(2\tau)\mathfrak f_2(\tau)=\sqrt{2},\nonumber\\[6pt]\label{jandWeber}
	&j(\tau)=\frac{(\mathfrak f(\tau)^{24}-16)^3}{\mathfrak f(\tau)^{24}}=\frac{(\mathfrak f_1(\tau)^{24}+16)^3}{\mathfrak f_1(\tau)^{24}}=\frac{(\mathfrak f_2(\tau)^{24}+16)^3}{\mathfrak f_2(\tau)^{24}},\\[4pt]\label{jandWeber2}
	&\lambda(2\tau)=\frac{\mathfrak f_1(2\tau)^8\mathfrak f_2(2\tau)^{16}}{16}.
\end{align}

\begin{proof}[Proof of the cases $k=12\pm8\sqrt{2}$ in Theorem \ref{mainresults1}]
	Take $\tau_0=(1,0,1)=i$. Since $D(2\tau_0)=-16$ and $h(-16)=1$, we know that $j(2\tau_0)$ is a rational integer. It is not difficult to find that
	\[j(2\tau_0)=287496\]
	by numerical calculation. Thus, by \eqref{jandWeber} and some numerical calculation for $\mathfrak f_1$ and $\mathfrak f_2$, we have
	\[\mathfrak f_1(2\tau_0)^{24}=512,\ \ \mathfrak f_2(2\tau_0)^{24}=-280+192\sqrt{2}.\]
	Then \eqref{jandWeber2} tells us that
	\[\lambda(2\tau_0)=17-12\sqrt{2},\]
	and the corresponding 
	\[k=\frac{4}{\sqrt{\lambda(2\tau_0)}}=12+8\sqrt{2}.\]
	Now, all the ingredients are in place to use Theorem \ref{mahlermeasureaslambda}. Take $\tau=\tau_0$ in \eqref{mahlermeasureandlambda}, we have
	\begin{align*}
		m\left(\!12+8\sqrt{2}\right)&=\re\left(\frac{16\im(\tau_0)}{\pi^2}\underset{m,n\in\Z}{{\sum}'}\frac{\chi_{-4}(n)}{(4m\tau_0+n)^2(4m\bar\tau_0+n)}\right)\\
		&=\re\left(\frac{16}{\pi^2}\underset{m,n\in\Z}{{\sum}'}\frac{\chi_{-4}(n)(4m\bar\tau_0+n)}{|4m\tau_0+n|^4}\right)\\
		&=\frac{16}{\pi^2}\underset{m,n\in\Z}{{\sum}'}\frac{\chi_{-4}(n)n}{\left(16m^2+n^2\right)^2}\\
		&=\frac{32}{\pi^2}\left(\frac{1}{2}\underset{m,n\in\Z}{{\sum}'}\frac{\chi_{-4}(n)n}{\left(16m^2+n^2\right)^2}\right).
	\end{align*}
	One can deduce from \cite[Corollary 14.3.16]{Mfaca} that the theta function
	\[f_{12+8\sqrt{2}}=\frac{1}{2}\underset{m,n\in\Z}{{\sum}}\chi_{-4}(n)nq^{16m^2+n^2}=q\!-\!3q^9\!+\!2q^{17}\!-\!q^{25}\!+\!10q^{41}\!-\!7q^{49}\!+\!\cdots\]
	is a normalized cusp form in $\mathcal{S}_2(\Gamma_0(64))$. This implies that
	\[m\left(12+8\sqrt{2}\right)=\frac{32}{\pi^2}L(f_{12+8\sqrt{2}},2).\]

	Next, Take $\tau_1=(5,-4,1)=\frac{2+i}{5}$. We can calculate that
	\begin{align*}
		&j(2\tau_1)=287496,\ \ f_1(2\tau_1)^{24}=512,\ \ \mathfrak f_2(2\tau_1)^{24}=-280-192\sqrt{2},\\[4pt]
		&\lambda(2\tau_1)=17+12\sqrt{2},\ \ k=\frac{4}{\sqrt{\lambda(2\tau_1)}}=12-8\sqrt{2}.
	\end{align*}
	By taking $\tau=\tau_1$ in \eqref{mahlermeasureandlambda}, we have
	\begin{align*}
		m\left(\!12-8\sqrt{2}\right)&=\frac{16}{5\pi^2}\underset{m,n\in\Z}{{\sum}'}\frac{\chi_{-4}(n)\left(\frac{8m}{5}+n\right)}{\left(\left(\frac{8m}{5}+n\right)^2+\left(\frac{4m}{5}\right)^2\right)^2}\\
		&=\frac{64}{\pi^2}\left(\frac{1}{4}\underset{m,n\in\Z}{{\sum}'}\frac{\chi_{-4}(n)(8m+5n)}{\left(16m^2+16mn+5n^2\right)^2}\right).
	\end{align*}
	Once again, according to \cite[Corollary 14.3.16]{Mfaca}, the theta function
	\begin{align*}
		f_{12-8\sqrt{2}}&=\frac{1}{4}\underset{m,n\in\Z}{{\sum}}\chi_{-4}(n)(8m+5n)q^{16m^2+16mn+5n^2}\\
		&= q^5-3q^{13}+5q^{29}+q^{37}-3q^{45}-7q^{53}+5q^{61}+2q^{85}+\cdots
	\end{align*}
	is a normalized cusp form in $\mathcal{S}_2(\Gamma_0(64))$. This implies that
	\[m\left(12-8\sqrt{2}\right)=\frac{64}{\pi^2}L(f_{12-8\sqrt{2}},2).\]
\end{proof}

For the other CM points in Table \ref{cmpoints} with $h(D_{2\tau_0})=1$, one can also repeat the above calculation to obtain identities of the form
\[m(k)=\frac{c_k}{\pi^2}L(f_k,2),\]
where $f_k$ are normalized cusp forms in $\mathcal{S}_2(\Gamma_0(N_k))$. To avoid lengthy calculations, we list the results directly in Table \ref{mahlermeasureandmodularforms}.

\begin{remark}\ 
	\begin{enumerate}
		\item As mentioned in the Introduction, the first three cases in table \textnormal{\ref{mahlermeasureandmodularforms}} are the results \eqref{Vliegasexamples} proved by Villegas. The cases $k=4\pm 4\sqrt{2}$ have also been proven in \cite{Guo}. And the cases $k=\sqrt[4]{8}(\sqrt{2}-1)i$ and $\sqrt[4]{8}(\sqrt{2}+1)$ are in fact proved by Samart in \cite{Sam15}.
		\item 	We do not guarantee that we have found all $k=\frac{4}{\sqrt{\lambda(2\tau_0)}}$ with degree $\leqslant 4$ that come from CM points. Since the degree of $\lambda(2\tau_0)$ may be strictly less than $h(D_{\tau_0})h(D_{4\tau_0}).$
	\end{enumerate}

\end{remark}

\begin{table}
	\caption{Mahler measures that comes from CM points in Table \ref{cmpoints}}\label{mahlermeasureandmodularforms}
	\scalebox{0.73}{
		\begin{tabular}{cccl}
			\hline
			\makebox[4cm][c]{\gape{$k$}} & \makebox[4cm][c]{\gape{$c_k$}} & \makebox[3cm][c]{\gape{$N_k$}} & \makebox[8cm][c]{\gape{$f_k$}}\\
			\hline\\[-9pt]
			$4 i$ & $16$ & $32$  & \hspace{1.2cm}$\frac{1}{2}\underset{m,n\in\Z}{{\sum}}\chi_{-4}(n)(2m+n)q^{8m^2+4mn+n^2}$\\
			$4\sqrt{2}$ & $16$ & $64$  & \hspace{1.2cm}$\frac{1}{2}\underset{m,n\in\Z}{{\sum}}\chi_{-4}(n)nq^{4m^2+n^2}$\\
			$2\sqrt{2}$ & $8$ & $32$  & \hspace{1.2cm}$\frac{1}{2}\underset{m,n\in\Z}{{\sum}}\chi_{-4}(n)(m+n)q^{2m^2+2mn+n^2}$\\
			$\sqrt[4]{8}(\sqrt{2}-1) i$ & $8$ & $64$  & \hspace{1.2cm}$\frac{1}{2}\underset{m,n\in\Z}{{\sum}}\chi_{-4}(n)(-2m+n)q^{5m^2-4mn+n^2}$\\
			$\sqrt[4]{8}(\sqrt{2}+1)$ & $8$ & $64$  & \hspace{1.2cm}$\frac{1}{2}\underset{m,n\in\Z}{{\sum}}\chi_{-4}(n)nq^{m^2+n^2}$\\
			$12+8\sqrt{2}$ & $32$ & $64$  & \hspace{1.2cm}$\frac{1}{2}\underset{m,n\in\Z}{{\sum}}\chi_{-4}(n)nq^{16m^2+n^2}$\\
			$12-8\sqrt{2}$ & $64$ & $64$  & \hspace{1.2cm}$\frac{1}{4}\underset{m,n\in\Z}{{\sum}}\chi_{-4}(n)(8m+5n)q^{16m^2+16mn+5n^2}$\\
			$8 i\sqrt{3\sqrt{2}+4}$ & $32$ & $256$  & \hspace{1.2cm}$\frac{1}{2}\underset{m,n\in\Z}{{\sum}}\chi_{-4}(n)(2m+n)q^{20m^2+4mn+n^2}$\\
			$8\sqrt{3\sqrt{2}-4}$ & $256$ & $256$  & \hspace{1.2cm}$\frac{1}{16}\underset{m,n\in\Z}{{\sum}}\chi_{-4}(n)(2m+5n)q^{4m^2+4mn+5n^2}$\\
			$2i\sqrt{2\sqrt{2}-2}$ & $8\sqrt{2}$ & $64$  & \hspace{1.2cm}$\frac{1}{2}\underset{m,n\in\Z}{{\sum}}\chi_{-4}(n)(-2m+n)q^{6m^2-4mn+n^2}$\\
			$2i\sqrt{2\sqrt{2}+2}$ & $8\sqrt {2}$ & $64$  & \hspace{1.2cm}$\frac{1}{2}\underset{m,n\in\Z}{{\sum}}\chi_{-4}(n)nq^{2m^2+n^2}$\\
			$4+4\sqrt{2}$ & $16\sqrt {2}$ & $64$  & \hspace{1.2cm}$\frac{1}{2}\underset{m,n\in\Z}{{\sum}}\chi_{-4}(n)nq^{8m^2+n^2}$\\
			$4-4\sqrt{2}$ & $32\sqrt {2}$ & $64$  & \hspace{1.2cm}$-\frac{1}{4}\underset{m,n\in\Z}{{\sum}}\chi_{-4}(n)(4m-3n)q^{8m^2-8mn+3n^2}$\\
			$4 i\sqrt{2\sqrt{2}+2}$ & $16\sqrt{2}$ & $128$  & \hspace{1.2cm}$\frac{1}{2}\underset{m,n\in\Z}{{\sum}}\chi_{-4}(n)(-2m+n)q^{12m^2-4mn+n^2}$\\
			$4\sqrt{2\sqrt{2}-2}$ & $64\sqrt{2}$ & $128$  & \hspace{1.2cm}$-\frac{1}{8}\underset{m,n\in\Z}{{\sum}}\chi_{-4}(n)(2m-3n)q^{4m^2-4mn+3n^2}$\\
			$(8-4\sqrt{3}) i$ & $48\sqrt{3}$ & $48$  & \hspace{1.2cm}$-\frac{1}{2}\underset{m,n\in\Z}{{\sum}}\chi_{-4}(n)(2m-n)q^{16m^2-12mn+3n^2}$\\
			$(8+4\sqrt{3}) i$ & $16\sqrt{3}$ & $48$  & \hspace{1.2cm}$-\frac{1}{2}\underset{m,n\in\Z}{{\sum}}\chi_{-4}(n)(2m-n)q^{16m^2-4mn+n^2}$\\
			$\sqrt{2}+\sqrt{6}$ & $6\sqrt{3}$ & $48$  & \hspace{1.2cm}$-\frac{1}{6}\underset{m,n\in\Z}{{\sum}}\chi_{-4}(n)(m-2n)q^{m^2-mn+n^2}$\\
			$\sqrt{2}-\sqrt{6}$ & $2\sqrt{3}$ & $48$  & \hspace{1.2cm}$-\frac{1}{2}\underset{m,n\in\Z}{{\sum}}\chi_{-4}(n)(3m-2n)q^{3m^2-3mn+n^2}$\\
			$4\sqrt{2}+4\sqrt{6}$ & $16\sqrt{3}$ & $192$  & \hspace{1.2cm}$\frac{1}{2}\underset{m,n\in\Z}{{\sum}}\chi_{-4}(n)nq^{12m^2+n^2}$\\
			$4\sqrt{2}-4\sqrt{6}$ & $48\sqrt{3}$ & $192$  & \hspace{1.2cm}$\frac{1}{2}\underset{m,n\in\Z}{{\sum}}\chi_{-4}(n)nq^{4m^2+3n^2}$\\
			$2\sqrt{3}\pm 2i$ & $8\sqrt{3}$ & $48$  & \hspace{1.2cm}$-\frac{1}{2}\underset{m,n\in\Z}{{\sum}}\chi_{-4}(n)(m-n)q^{4m^2-2mn+n^2}$\\
			$(32-12\sqrt{7}) i$ & $112\sqrt{7}$ & $112$  & \hspace{1.2cm}$-\frac{1}{2}\underset{m,n\in\Z}{{\sum}}\chi_{-4}(n)(2m-n)q^{32m^2-28mn+7n^2}$\\
			$(32+12\sqrt{7}) i$ & $16\sqrt{7}$ & $112$  & \hspace{1.2cm}$-\frac{1}{2}\underset{m,n\in\Z}{{\sum}}\chi_{-4}(n)(2m-n)q^{32m^2-4mn+n^2}$\\
			$\frac{3\sqrt{2}}{2}+\frac{\sqrt{14}}{2}$ & $14\sqrt{7}$ & $112$  & \hspace{1.2cm}$-\frac{1}{14}\underset{m,n\in\Z}{{\sum}}\chi_{-4}(n)(m-4n)q^{m^2-mn+2n^2}$\\
			$\frac{3\sqrt{2}}{2}-\frac{\sqrt{14}}{2}$ & $2\sqrt{7}$ & $112$  & \hspace{1.2cm}$-\frac{1}{2}\underset{m,n\in\Z}{{\sum}}\chi_{-4}(n)(7m-4n)q^{7m^2-7mn+2n^2}$\\
			$24\sqrt{2}+8\sqrt{14}$ & $16\sqrt{7}$ & $448$  & \hspace{1.2cm}$\frac{1}{2}\underset{m,n\in\Z}{{\sum}}\chi_{-4}(n)nq^{28m^2+n^2}$\\
			$24\sqrt{2}-8\sqrt{14}$ & $112\sqrt{7}$ & $448$  & \hspace{1.2cm}$\frac{1}{2}\underset{m,n\in\Z}{{\sum}}\chi_{-4}(n)nq^{4m^2+7n^2}$\\
			$6\pm2i\sqrt{7}$ & $8\sqrt{7}$ & $56$  & \hspace{1.2cm}$-\frac{1}{2}\underset{m,n\in\Z}{{\sum}}\chi_{-4}(n)(m-n)q^{8m^2-2mn+n^2}$\\
			$\frac{3}{2}\pm i\frac{\sqrt{7}}{2}$ & $4\sqrt{7}$ & $28$  & \hspace{1.2cm}$-\frac{1}{4}\underset{m,n\in\Z}{{\sum}}\chi_{-4}(n)(3m-2n)q^{4m^2-3mn+n^2}$\\
			$\frac{3\sqrt{7}}{2}\pm\frac{i}{2}$ & $4\sqrt{7}$ & $56$  & \hspace{1.2cm}$-\frac{1}{4}\underset{m,n\in\Z}{{\sum}}\chi_{-4}(n)(m-2n)q^{2m^2-mn+n^2}$\\
			\hline
	\end{tabular}}
\end{table}

\section{Beilinson's Conjecture for curves over number fields}\label{Beilinson}

This section is devoted to give a relatively detailed statement of Beilinson's conjecture for curves over number field. The content of \cite{DJZ06} is widely quoted.


We start with some $K$-theory settings. Let $F$ be a field, the group $K_2(F)$ can be defined as
\[F^*\otimes_{\Z} F^*/\langle a\otimes(1-a),a\in F,a\neq 0,1\rangle.\]
The class of $a\otimes b$ is usually denoted by $\{a,b\}$. One can see form definition that $K_2(F)$ is an abelian group (we use ``+" for the operator) given by generators $\{a,b\}$ for $a,b\in F^*$ and relations
\begin{align*}
	\{a_1a_2,b\}&=\{a_1,b\}+\{a_2,b\},\\
	\{a,b_1b_2\}&=\{a,b_1\}+\{a,b_2\},\\
	\{a,1-a\}&=0,\ \text{for}\ a\in F, a\neq 0,1.
\end{align*}

Let $K$ be a number field with the ring of
integers $\mathcal O_K$ and $C/K$ be a (non-singular, projective, geometrically irreducible) curve of genus $g$ with function field $F=K(C)$. Define the tame $K_2$ of $C$ as
\[K_2^T(C)=\ker\left(K_2(F)\overset{T}{\longrightarrow}\bigoplus_{x\in C(\overline{\Q})}\overline{\Q}^*\right),\]
where the $x$-component of $T$ is given by the \textit{tame symbol}
\begin{equation}\label{tamesymbol}
	T_x(\{a,b\})=(-1)^{\mathrm{ord}_x(a)\mathrm{ord}_x(b)}\frac{a^{\mathrm{ord}_x(b)}}{b^{\mathrm{ord}_x(a)}}(x).
\end{equation}

For each $\{a,b\}\in K_2^T(C)$, there is an almost everywhere defined $1$-form
\[\eta(a,b)=\log|a|d\arg b-\log|b|d\arg a\]
on the Riemann surface $C(\CC)$, where $d\arg a$ is defined by $\Im(da/a)$. One can check that
\begin{align*}
	\langle\cdot,\cdot\rangle: H_1(C(\CC);\Z)&\times K_2^T(C)/\text{torsion}\to\R,\\
	(\gamma,\{a,b\})&\mapsto\frac{1}{2\pi}\int_\gamma\eta(a,b).
\end{align*}
is a well-defined pairing. (Note that when calculating the above integral, we choose a representative of $\gamma$ that avoids the set of zeros and poles of $a$ and $b$.) Since $\eta(a,b)$ changes sign under complex conjugation, it can be shown that $\langle\cdot,\cdot\rangle$ is in fact equal to zero on $H_1(C(\CC);\Z)^+$, the subgroup of $H_1(C(\CC);\Z)$ consisting of  loops that invariant under complex conjugation. We only need to consider the above pairing for $\gamma\in H_1(C(\CC);\Z)^-$, the subgroup of $H_1(C(\CC);\Z)$ on which complex conjugation acts by multiplying $-1$. Thus, we obtain the \textit{regulator pairing}
\begin{equation}\label{regulatorpairing}
	\langle\cdot,\cdot\rangle: H_1(C(\CC);\Z)^-\times K_2^T(C)/\text{torsion}\to\R.
\end{equation}

From basic topology we know that $H_1(C(\CC);\Z)$ has rank $g$. Beilinson originally conjectured that $K_2^T(C)/\text{torsion}$ also has rank $g$ and the pairing \eqref{regulatorpairing} is non-degenerate. However, Bloch
and Grayson discovered in \cite{BG86} that the rank of $K_2^T(C)/\text{torsion}$ can sometimes $>g$. As a modification, we turn to a subgroup $K_2(C;\Z)$ of $K_2^T(C)/\text{torsion}$ defined by
\[K_2(C;\Z)=\frac{\ker(K_2(F)\overset{T}{\to}\bigoplus_{\mathcal D}\mathbb{F}(\mathcal D)^*)}{\text{torsion}}\subseteq\frac{K_2^T(C)}{\text{torsion}},\]
where $\mathcal D$ runs through all irreducible curves on a regular proper model $\mathcal C/\mathcal O_K$ of $C/K$, and $\mathbb F(\mathcal D)$ is the residue field at $\mathcal D$. The
$\mathcal D$-component of $T$ is given by the tame symbol similar to \eqref{tamesymbol}:
\[T_{\mathcal D}(\{a,b\})=(-1)^{v_{\mathcal D}(a)v_{\mathcal D}(b)}\frac{a^{v_{\mathcal D}(b)}}{b^{v_{\mathcal D}(a)}}(\mathcal D),\]
where $v_{\mathcal D}$ is is the valuation corresponding to $\mathcal D$. It is expected that $K_2(C;\Z)$ has rank $g$.

For each embedding $\sigma:K\to \CC$, by applying $\sigma$ to coefficients of the equation defining $C$, we get a curve $C^\sigma$. Let $X^\sigma$ be the connected Riemann surface of genus $g$ associated to $C^\sigma(\CC)$ and let $X$ be the disjoint union of all $X^\sigma$. In fact, $X$ is exactly the Riemann surface  corresponding to $C\times_\Q\CC$. Through the action on $\CC$ in $C\times_\Q\CC$, the complex conjugation
acts on $X$, then on $H_1(X;\Z)$. Similarly, one can define $H_1(X;\Z)^-$ to be the subgroup of $H_1(X;\Z)$ on which complex conjugation acts by multiplying  $-1$. We have
\[H_1(X;\Z)^-=\bigoplus_\sigma H_1(X^\sigma;\Z)^-\]
and thus the rank of $H_1(X;\Z)^-$ is $r=g[K:\Q]$. 

To state Beilinson's conjecture for curves over number fields, we extend the definition of the regulator pairing $\langle\cdot,\cdot\rangle$ to all $H_1(X;\Z)^-$ by putting
\[\langle\gamma,\{a,b\}\rangle=\frac{1}{2\pi}\int_\gamma\eta(a^\sigma,b^\sigma)\]
for $\gamma\in H_1(X^\sigma;\Z)^-$, where $a^\sigma$ is the function on $X^\sigma$ obtained by applying $\sigma$ to the coefficients of $a\in K(C)$.

\begin{conjecture}[Beilinson]\label{Beilinsonsconjecture}
	Let $C$ be a non-singular, projective, geometrically irreducible curve of genus $g$ defined over $K$ and let $X$ be defined as above, then
	\begin{enumerate}
		\item $K_2(C;\Z)$ is a free abelian group of rank $r=g[K:\Q]$ and the pairing $\langle\cdot,\cdot\rangle: H_1(X;\Z)^-\times K_2(C;\Z)\to\R$ is non-degenerate;
		\item The absolute value $R$ (this is called the \textnormal{Beilinson regulator}) of the determinant of the pairing in (1) with respect to $\Z$-bases of $H_1(X;\Z)$ and $K_2(C;\Z)$ is a nonzero rational multiple of $\pi^{-2r}L(C,2)$, where $L(C,s)$ is the $L$-function of $C$.
	\end{enumerate}
\end{conjecture}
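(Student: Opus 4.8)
The plan is to \emph{verify} Conjecture \ref{Beilinsonsconjecture} for the concrete curves produced by our construction rather than to prove it in general, which is far out of reach. I will carry out the argument for $C=E_{12+8\sqrt2}$ over $K=\Q(\sqrt2)$; the other four real-quadratic families are handled the same way with minor adjustments. Here $g=1$ and $r=g[K:\Q]=2$. The two embeddings $\sigma_1,\sigma_2\colon K\to\CC$ (the identity and $\sqrt2\mapsto-\sqrt2$) split $X$ as the disjoint union of the Riemann surfaces of $E_{12+8\sqrt2}$ and $E_{12-8\sqrt2}$, so that $H_1(X;\Z)^-=H_1(X^{\sigma_1};\Z)^-\oplus H_1(X^{\sigma_2};\Z)^-$ has rank $2$, with generators $\gamma_1,\gamma_2$. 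Thus the task reduces to exhibiting two elements of $K_2(C;\Z)$, computing the $2\times2$ regulator matrix $\big(\langle\gamma_i,M_j\rangle\big)$, and identifying its determinant with a rational multiple of $\pi^{-4}L(C,2)$.

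For the two classes I will exploit the CM structure: since $E_{12+8\sqrt2}$ has CM it is a $\Q$-curve, so there is an isogeny $\phi\colon E_{12+8\sqrt2}\to E_{12-8\sqrt2}$, which I write down explicitly from the Weierstrass models \eqref{Weierstrassform}. Taking $M_1=\{x,y\}$ and $M_2=\{\phi^*x,\phi^*y\}$, where $x,y$ are the coordinates of the defining model $P_k=0$, the regulator pairing $\langle\gamma,\{a,b\}\rangle=\frac{1}{2\pi}\int_\gamma\eta(a,b)$ becomes computable. By the classical description of $m(P_k)$ as a regulator integral over the cycle on which $\abs{P_k}$ changes sign \cite{Den97}, the first column reduces to $m(12+8\sqrt2)$ and $m(12-8\sqrt2)$ (applying $\sigma_1$ and $\sigma_2$ respectively), and the second column reduces to the same two Mahler measures transported through $\phi$ via the projection formula $\langle\gamma,\{\phi^*x,\phi^*y\}\rangle=\langle\phi_*\gamma,\{x,y\}\rangle$. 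Hence the Beilinson regulator $R$ is the absolute value of a $2\times2$ determinant whose entries are explicit rational combinations of $m(12\pm8\sqrt2)$.

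To reach the $L$-value I feed in Table \ref{mahlermeasureandmodularforms}, which writes $m(12\pm8\sqrt2)$ as rational multiples of $L$-values at $s=2$ of the associated weight-$2$ newforms. Independently, a search in LMFDB \cite{database} shows that the degree-four $L$-function $L(C,s)=L(E_{12+8\sqrt2}/K,s)$ factors as the product of the $L$-functions of two weight-$2$ newforms, of levels $64$ and $32$. Matching these two modular factors against the two Mahler-measure $L$-values (up to rational scalars) collapses the determinant to $R=\frac{4096}{\pi^4}L(C,2)$, which is exactly part (2) for $r=2$; part (1)'s non-degeneracy is then immediate because this value is nonzero, and the two chosen symbols are visibly independent in $K_2(C;\Z)\otimes\Q$.

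The main obstacle is establishing part (1) rigorously---specifically, that $M_1,M_2$ lie in the \emph{integral} group $K_2(C;\Z)$ and not merely in $K_2^T(C)$. This forces me to produce a regular proper model $\mathcal C/\mathcal O_K$ and to check that the tame symbols $T_{\mathcal D}(M_j)$ vanish along every vertical component $\mathcal D$ in the fibers of bad reduction, which requires analyzing those special fibers and the valuations of $x,y,\phi^*x,\phi^*y$ there; this integrality check is the genuinely delicate step. Two secondary difficulties are the bookkeeping that identifies the modular $L$-factors coming from the Mahler measures with the two LMFDB factors of $L(C,s)$ and pins down the exact constant $\frac{4096}{\pi^4}$, and the verification that the cycles $\gamma_1,\gamma_2$ furnished by the Mahler-measure description genuinely generate $H_1(X;\Z)^-$ and lie in the $-1$ eigenspace, so that the computed determinant is the true Beilinson regulator rather than an index multiple of it.
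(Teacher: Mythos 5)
Your verification strategy for $E_{12\pm8\sqrt2}$ --- explicit isogeny $\phi$, symbols $M_1=\{x,y\}$ and $M_2=\{\phi^*(x),\phi^*(y)\}$, a $2\times2$ regulator matrix of Mahler measures obtained via Jensen's formula and the projection formula $\langle\gamma,\{\phi^*(x),\phi^*(y)\}\rangle=\langle\phi_*\gamma,\{x,y\}\rangle$, then matching against the LMFDB factorization $L(E,s)=L(f_{64},s)L(f_{32},s)$ to land on $R=\frac{4096}{\pi^4}L(E,2)$ --- is exactly the route the paper takes in Section \ref{example12pm8sqrt(2)}, including the determination of $\phi_*\gamma_E$ and $(\phi^\sigma)_*\gamma_{E^\sigma}$ by comparing period integrals. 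The only substantive divergence is the integrality step you single out as the main obstacle: the paper does not construct a regular proper model or check tame symbols on vertical fibers, but instead invokes the fact that $P_k$ is tempered so that, by Villegas, $nM_1\in K_2(E;\Z)$ for some $n\in\N$; correspondingly, the paper verifies only part (2) of the conjecture (up to this rational rescaling), deducing merely the $\Z$-independence of $M_1,M_2$ from $R\neq 0$ rather than the full non-degeneracy statement of part (1).
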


The first part of Conjecture \ref{Beilinsonsconjecture} is widely open. In fact, it is not yet known whether $K_2(C;\Z)$ is finite generated, let alone to find its $\Z$-bases. However, if we can construct $r$ elements in $K_2(C;\Z)$ with the associated $R\neq 0,$ then these $r$ elements are
linearly independent over $\Z$. In this case, we may be able to verify the second part of Conjecture \ref{Beilinsonsconjecture}.

\section{Proofs of Theorem \ref{mainresults1}}\label{example12pm8sqrt(2)}

When $k=12\pm8\sqrt{2}$, under the rational transformation \eqref{Weierstrassform}, we have Weierstrass equations
\begin{align*}
	E_{12+8\sqrt{2}}&:Y^2=X^3+\big(66+48\sqrt{2}\big)X^2+X,\\
	E_{12-8\sqrt{2}}&:Y^2=X^3+\big(66-48\sqrt{2}\big)X^2+X.
\end{align*}
They are defined over $K=\Q(\sqrt{2})$. Let $\sigma\in\Gal(K/\Q)$ be the nontrivial element that sends $\sqrt{2}$ to $-\sqrt{2}$. For simplicity, we write $E$ for $E_{12+8\sqrt{2}}$, then $E^{\sigma}$ is for $E_{12-8\sqrt{2}}$ according to notations in Section \ref{Beilinson}.

An elliptic curve over a (Galois) number field is said to be a \textit{$\Q$-curve} if it is isogenic to its every Galois conjugations. Since $E$ has CM and every elliptic curve with CM is a $\Q$-curve (see \cite[Section 2]{BF18}), there must exist an isogeny from $E$ to $E^\sigma$. By using \textsf{PARI/GP}, we  compute the lattices that associated to $E$ and $E^{\sigma}$, and discover that there seems to have an isogeny $\phi:E\to E^\sigma$ of degree $4$ with kernel
\[G=\left\{O,(0,0),\left(-1,4\sqrt{4+3\sqrt{2}}\hspace{1pt}\right),\left(-1,-4\sqrt{4+3\sqrt{2}}\hspace{1pt} \right)\right\}.\]
To obtain this isogeny, we first use Vélu's formula (see \cite[Theorem 12.16]{Was08} or \cite[Theorem 25.1.6]{Gal12}) to construct an elliptic curve
\[\widetilde{E}: Y^2=X^3+(66+48\sqrt{2}) X^2+(1276+960\sqrt{2}) X+137464+96960\sqrt{2}\]
and isogeny $\psi: E\to \widetilde{E}$ with $\ker\psi=G.$ This isogeny  is given by $(X,Y)\mapsto(\psi_1(X),Y\psi_2(X))$, where
\begin{align*}
	\psi_1(X)&=\frac{X^4+2X^3-2\left(127+96\sqrt{2}\right)X^2+2X+1}{X(X+1)^2},\\
	\psi_2(X)&=\frac{(X-1) \left(X^4+4X^3+2 \left(131+96\sqrt{2}\hspace{1pt}\right) X^2+4X+1\right)}{X^2 (X+1)^3}.
\end{align*}
Moreover, we have (by \cite[Theorem 25.1.6]{Gal12})
\begin{equation}\label{differential1}
	{\psi}^*(\omega_{\widetilde E})=\omega_E,
\end{equation}
where $\omega_{\widetilde E}$ and $\omega_{E}$ are invariant differentials of $\widetilde E$ and $E$ defined by $\frac{dX}{2Y}.$ Next, set
$u=\frac{3}{2}-\sqrt{2},r=-\frac{49}{2}+18\sqrt{2},$
one can easily verify that
\begin{align}\label{isomorphism}
	\varphi: \widetilde E&\to E^\sigma,\\
	(X,Y)&\mapsto\left(u^2X+r,u^3Y\right)\nonumber
\end{align}
is an isomorphism over $K$. It follows that $\phi=\varphi\circ\psi:E\to E^\sigma$ is our desire isogeny with kernel $G$. We can explicitly write down its formula as $\phi:(X,Y)\mapsto(\phi_1(X),Y\phi_2(X)),$ where
\begin{align*}
	\phi_1(X)&=\frac{(X-1)^2 \left(\left(17-12 \sqrt{2}\hspace{1pt}\right) X^2-6 \left(5-4\sqrt{2}\hspace{1pt}\right) X+17-12\sqrt{2}\hspace{1pt}\right)}{4X(X+1)^2},\\
	\phi_2(X)&=\frac{\left(99-70 \sqrt{2}\hspace{0.5pt}\right)(X-1) \left(X^4+4 X^3+2\left(131+96 \sqrt{2}\hspace{1pt}\right) X^2+4 X+1\right)}{8 X^2(X+1)^3}.
\end{align*}

By applying $\sigma$ to coefficients of $\phi$, we obtain an isogeny $\phi^\sigma:E^\sigma\to E$. One can check that $\phi^\sigma\circ\phi=[4].$

Let $|x|=1$, we assume $x=e^{ i\theta}$. Then $x+\frac{1}{x}+k=2\cos\theta+k\in\R$ if $k\in \R.$
When $|2\cos\theta+k|>2$, the equation
\begin{equation}\label{quadraticequation}
	y+\frac{1}{y}=-k-x-\frac{1}{x}
\end{equation}
have two real roots $y_1(x), y_2(x)$. They are not equal since the discriminant $\Delta=\left( x+\frac{1}{x}+k\right)^2-4>0$. Note that $y_1(x)y_2(x)=1$, without loss of generality we can always take $y_1(x)$ to be the one that has absolute value greater than $1$. However, when $|2\cos\theta+k|\leqslant 2$, the roots $y_1,y_2$ of \eqref{quadraticequation} are conjugate on the unit circle $S^1=\{x=e^{ i\theta}|\theta\in[-\pi,\pi]\}$. Notice that in this case, $y_1=y_2$ only when $2\cos\theta+k=\pm 2$.

If $k=12+8\sqrt{2}>4$, then for every $\theta\in[-\pi,\pi]$ we have $|2\cos\theta+k|>2$. Let 
\begin{align*}
	y^E_1(x)&=\frac{-x-\frac{1}{x}-12-8\sqrt{2}-\sqrt{\left(x+\frac{1}{x}+12+8 \sqrt{2}\right)^2-4}}{2},\\
	y^E_2(x)&=\frac{-x-\frac{1}{x}-12-8\sqrt{2}+\sqrt{\left(x+\frac{1}{x}+12+8 \sqrt{2}\right)^2-4}}{2}
\end{align*}
be the two roots of \eqref{quadraticequation}. As mentioned in the previous paragraph, $y_1^E$ is taken to be the root that has absolute value greater than 1.
Thus the circle $S^1$ can be lifted to a loop $\gamma_E=\{(x,y_1^E(x))||x|=1\}$ on $E(\CC)$. We take the orientation of $\gamma_E$ induced by $\theta$ increasing. Since complex conjugation reverse the orientation, $\gamma_E$ is an element of $H_1(E(\CC);\Z)^-$. This path is called the \textit{Deninger path}, one can see \cite[\S 7.3]{BZ20} for more details.

However, if $k=12-8\sqrt{2}=0.6862\cdots$, as $x=e^{ i\theta}$ varies on $S^1$, the discriminant $\Delta=0$ when $x=x_0=-5+4\sqrt{2}+2 i \sqrt{10 \sqrt{2}-14}$ or $x=\bar{x}_0$. The parameter $\theta_0\in[-\pi,\pi]$ corresponding to $x_0$ is $\theta_0=\arctan\frac{2\sqrt{2+10\sqrt{2}}}{7}$. Thus \eqref{quadraticequation} has real roots
\begin{align*}
	y^{E^\sigma}_1(x)&=\frac{-x^2-(12-8\sqrt{2})x-1-\sqrt{\left(x^2+(12-8\sqrt{2})x+1\right)^2-4 x^2}}{2 x},\\
	y^{E^\sigma}_2(x)&=\frac{-x^2-(12-8\sqrt{2})x-1+\sqrt{\left(x^2+(12-8\sqrt{2})x+1\right)^2-4 x^2}}{2 x}
\end{align*}
for $\theta\in[-\theta_0,\theta_0]$. We illustrate in Figure \ref{pathofy}
\begin{figure}
	\centering
	\subfigure[The path of $y^{E^\sigma}_1$]{\begin{tikzpicture}
			\draw[very thin,->] (-3.6,0) -- (1,0);
			\draw[very thin,->] (0,-1.2) -- (0,1.2);
			\draw[dashed,-stealth] (-48.9396:1) arc (-48.9396:-75:1);
			\draw[dashed] (-65:1) arc (-65:-175:1);
			\draw[dashed,-stealth] (175:1) arc (175:68:1);
			\draw[dashed] (70:1) arc (70:48.9396:1);
			\draw[very thick] (175:1) -- (-3.21,0.1);
			\draw[very thick] (-175:1) -- (-3.21,-0.08);
			\draw[very thick] (-3.21,0.1) arc(90:270:0.089);
			\fill (48.9396:1) circle (1.5pt);
			\node[above right] at (48.9396:1) {$\pi$};
			\fill (-48.9396:1) circle (1.5pt);
			\node[below right] at (-48.9396:1) {$-\pi$};
			\fill (175:1) circle (1.5pt);
			\fill (-175:1) circle (1.5pt);
			\node[above left] at (177:0.9) {$\theta_0$};
			\node[below left] at (-177:0.9) {$-\theta_0$};
	\end{tikzpicture}}\hspace{1.5cm}
	\subfigure[The path of $y^{E^\sigma}_2$]{\begin{tikzpicture}
			\draw[very thin,->] (-2.0,0) -- (1,0);
			\draw[very thin,->] (0,-1.2) -- (0,1.2);
			\draw[dashed,-stealth] (48.9396:1) arc (48.9396:76:1);
			\draw[dashed] (75:1) arc (75:175:1);
			\draw[very thick] (175:1) -- (-0.4,0.1);
			\draw[very thick] (-175:1) -- (-0.4,-0.08);
			\draw[very thick] (-0.4,0.1) arc(90:-90:0.089);
			\draw[dashed,-stealth] (-175:1) arc (-175:-75:1);
			\draw[dashed] (-76:1) arc (-76:-48.9396:1);
			\fill (48.9396:1) circle (1.5pt);
			\node[above right] at (48.9396:0.9) {$-\pi$};
			\fill (-48.9396:1) circle (1.5pt);
			\node[below right] at (-48.9396:1) {$\pi$};
			\fill (175:1) circle (1.5pt);
			\fill (-175:1) circle (1.5pt);
			\node[above left] at (177:0.9) {$-\theta_0$};
			\node[below left] at (-177:0.9) {$\theta_0$};
	\end{tikzpicture}}
	\caption{The path of $y^{E^\sigma}_1$ and $y^{E^\sigma}_2$ when $\theta\in[-\pi,\pi]$.}\label{pathofy}
\end{figure}
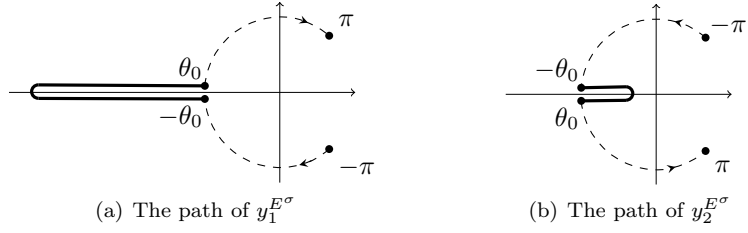
the paths of $y^{E^\sigma}_1$ and $y^{E^\sigma}_2$ when $\theta$ varies from $-\pi$ to $\pi$. (Take the principal branch of the square root.) In particular, we indicate with thick lines the paths when $\theta\in[-\theta_0,\theta_0].$ Let
\[\gamma_1^{E^\sigma}=\left\{\left(x,y_1^{E^\sigma}(x)\right)|\theta\in[-\theta_0,\theta_0]\right\},\ \ \ \gamma_2^{E^\sigma}=\left\{\left(x,y_2^{E^\sigma}(x)\right)|\theta\in[-\theta_0,\theta_0]\right\}.\]
Take $\gamma_{E^\sigma}=\gamma_1^{E^\sigma}\cup\gamma_2^{E^\sigma}$ with the orientation induced on $\gamma_1^{E^\sigma}$ by $\theta$ increasing and on $\gamma_2^{E^\sigma}$ by $\theta$ decreasing. One can check that $\gamma_{E^\sigma}$ is an element of $H_1(E^\sigma(\CC);\Z)^-$.

Let $M_1=\{x,y\}$ and $M_2=\{\phi^*(x),\phi^*(y)\}$, where the $x,y$ in $M_1$ are functions in $K(E)$ by \eqref{rationaltransformation} and the $x,y$ in $M_2$ are functions in $K(E^\sigma)$, they pull back by $\phi$ to functions in $K(E)$. Since our polynomial family $P_k(x,y)=x+\frac{1}{x}+y+\frac{1}{y}+k$ is tempered, i.e. the set of roots of all the face polynomials of $P_k$ consists of roots of unity only (see \cite{Vil97,LQ19} for more about this notion), we know from \cite[III.9]{Vil97} that $nM_1\in K_2(E;\Z)$ for some $n\in \N$. Therefore, one can use $M_1$ and $M_2$ to verify the second part of Beilinson's conjecture. The Beilinson regulator for $\gamma_E,\gamma_{E^\sigma}\in H_1(X;\Z)^-$ and $M_1,M_2$ now reads

\begin{align}\label{Beilinsonregulator}
	R&=\left|\det{\begin{pmatrix}\langle\gamma_E,M_1\rangle&\langle\gamma_{E^{\sigma}},M_1\rangle\\\langle\gamma_E,M_2\rangle&\langle\gamma_{E^\sigma},M_2\rangle\end{pmatrix}}\right|.
\end{align}

\begin{proof}[Proof of \eqref{thecasek=12pm8sqrt2}]
	By definition, we calculate that
	\begin{align*}
		\langle\gamma_E,M_1\rangle&=\frac{1}{2\pi}\int_{\gamma_E}\eta(x,y)\\
		&=\frac{1}{2\pi}\int_{\gamma_E}\log|x|\im\left(\frac{dy}{y}\right)-\log|y|\im\left(\frac{dx}{x}\right)\\
		&=-\frac{1}{2\pi}\int_{-\pi}^{\pi}\log|y^E_1(e^{ i\theta})|d\theta\\
		&=-m\left(12+8\sqrt{2}\right),
	\end{align*}
	where the last equality follows by Jensen's formula (see \cite[\S\S 1A]{Boyd98} or \cite[\S\S 3.2]{LR07}). Similarly, we have
	\[\langle\gamma_{E^{\sigma}},M_1\rangle=-2m\left(12-8\sqrt{2}\right).\]
	
	Also, for the second row of \eqref{Beilinsonregulator}, we have 
	\begin{align*}
		\langle\gamma_E,M_2\rangle&=\langle\gamma_E,\{\phi^*(x),\phi^*(y)\}\rangle\\
		&=\langle\phi_*\gamma_E,\{x,y\}\rangle
	\end{align*}
	and
	\begin{align*}
		\langle\gamma_{E^{\sigma}},M_2\rangle&=\langle\gamma_{E^{\sigma}},\{\phi^*(x)^\sigma,\phi^*(y)^\sigma\}\rangle\\
		&=\langle\gamma_{E^{\sigma}},\{(\phi^\sigma)^*(x),(\phi^\sigma)^*(y)\}\rangle\ \ \ (\text{here\ } x,y\in\CC(E(\CC))),\\
		&=\langle(\phi^\sigma)_*\gamma_{E^{\sigma}},\{x,y\}\rangle.
	\end{align*}
	In order to determine $\phi_*\gamma_E$ and $(\phi^\sigma)_*\gamma_{E^{\sigma}}$. One can calculate that
	
	\begin{equation}\label{diff1}
		\begin{split}
			\int_{\gamma_E}\omega_E&=\int_{\gamma_E}\frac{dX}{2Y}\\
			&=\int_{\gamma_E}\frac{ydx}{x(1-y^{2})}\ \ \ (\text{because\ } \frac{dX}{2Y}=\frac{ydx}{x(1-y^{2})}\ \text{by \eqref{inverserationaltransformation}})\\
			&=\int_{-\pi}^{\pi}\frac{y_1^E(e^{ i\theta})de^{ i\theta}}{e^{ i\theta}(1-y_1^E(e^{ i\theta})^2)}\\[5pt]
			&\approx0.27152i
		\end{split}
	\end{equation}
	and
	\begin{equation}\label{diff2}
		\begin{split}
			\int_{\gamma_{E^\sigma}}\omega_{E^\sigma}&=\int_{\gamma_1^{E^\sigma}}\omega_{E^\sigma}+\int_{\gamma_2^{E^\sigma}}\omega_{E^\sigma}\\[3pt]
			&=\int_{-\theta_0}^{\theta_0}\frac{y_1^{E^\sigma}(e^{ i\theta})de^{ i\theta}}{e^{ i\theta}(1-y_1^{E^\sigma}(e^{ i\theta})^2)}+\int_{\theta_0}^{-\theta_0}\frac{y_2^{E^\sigma}(e^{ i\theta})de^{ i\theta}}{e^{ i\theta}(1-y_2^{E^\sigma}(e^{ i\theta})^2)}\\[5pt]
			&\approx 3.1651 i.
		\end{split}
	\end{equation}
	Compare the above integrals of $\omega_E$ and $\omega_{E^\sigma}$ with the periods of lattices associated to $E$ and $E^\sigma$ calculated by \textsf{PARI/GP}, we can deduce that $\gamma_E$ and $\gamma_{E^\sigma}$ are in fact generators of $H_1(E;\Z)^-$ and $H_1(E^\sigma;\Z)^-$. Since $\phi$ and $\phi^\sigma$ are
	defined over $\R$, we know that $\phi_*$ maps $H_1(E(\CC);\Z)^-$ to $H_1(E^\sigma(\CC);\Z)^-$ and $(\phi^\sigma)_*$ maps $H_1(E^\sigma(\CC);\Z)^-$ to $H_1(E(\CC);\Z)^-$. Thus, there must exist $a,b\in\Z$ such that
	\[\phi_*\gamma_E=a\gamma_{E^\sigma},\ \ (\phi^\sigma)_*\gamma_{E^{\sigma}}=b\gamma_E.\]
	We have $ab=4$ because $\phi^\sigma\circ\phi=[4]$. According to \eqref{differential1} and \eqref{isomorphism}, we have
	\[\phi^*(\omega_{E^\sigma})=(\varphi\circ\psi)^*(\omega_{E^\sigma})=\psi^*\left(\frac{1}{u}\omega_{\widetilde E}\right)=\frac{1}{u}\omega_E.\]
	Therefore
	\begin{equation}\label{diff11}
		\int_{\phi_*\gamma_E}\omega_{E^\sigma}=\int_{\gamma_E}\phi^*(\omega_{E^\sigma})=\frac{1}{u}\int_{\gamma_E}\omega_E\approx 3.1651 i
	\end{equation}
	according to \eqref{diff1}. Since $a$ is an integer, by comparing \eqref{diff2} with \eqref{diff11}, it follows that $a=1$ and then $b=4$. 
	
	Immediately, we have
	\begin{align*}
		\langle\gamma_E,M_2\rangle&=\langle\gamma_{E^\sigma},M_1\rangle=-2m\left(12-8\sqrt{2}\right),\\
		\langle\gamma_{E^\sigma},M_2\rangle&=4\langle\gamma_{E},M_1\rangle=-4m\left(12+8\sqrt{2}\right),
	\end{align*}
	and
	\begin{equation}\label{RasMahlermeasure}
		R=4\left(m\left(12+8\sqrt{2}\right)^2-m\left(12-8\sqrt{2}\right)^2\right).
	\end{equation}
	Beilinson's conjecture implies that $R$ should be some rational multiple of $\pi^{-4}L(E,2).$
	
	In \cite{BF18}, Bruin and Ferraguti described an algorithm to express explicitly the $L$-functions of elliptic curves completely defined over quadratic number fields without CM as the product of two $L$-functions associated to a pair of conjugate newforms. Fortunately, in our CM case, we can seek help from the LMFDB database \cite{database}.
	
	Since $E$ has $j$-invariant $41113158120 + 29071392966\sqrt{2}$ and conductor norm $32$. One can search in LMFDB and find that $E$ is isomorphic over $K$ to
	the elliptic curve with LMFDB label \href{https://www.lmfdb.org/EllipticCurve/2.2.8.1/32.1/a/8}{\texttt{2\!.\!2\!.\!8\!.\!1-32\!.\!1-a8}}. Also, its $L$-function with label \href{https://www.lmfdb.org/L/4/2e11/1.1/c1e2/0/0}{\texttt{4-2e11-1\!.\!1-c1e2-0-0}} can be written as
	\[L(E,s)=L(f_{64},s)L(f_{32},s),\]
	where
	\begin{align*}
		f_{64}&=\frac{\eta(8\tau)^8}{\eta(4\tau)^2\eta(16\tau)^2}=q+2q^5-3q^9-6q^{13}+2q^{17}-q^{25}+\cdots\in\mathcal S_2(\Gamma_0(64)),\\[5pt]
		f_{32}&=\eta(4\tau)^2\eta(8\tau)^2=q-2q^5-3q^9+6q^{13}+2q^{17}-q^{25}+\cdots\in\mathcal S_2(\Gamma_0(32)).
	\end{align*}
	One can use Sturm bound to prove that
	\begin{align*}
		&\frac{1}{2}\underset{m,n\in\Z}{{\sum}}\chi_{-4}(n)nq^{16m^2+n^2}=\frac{f_{64}+f_{32}}{2},\\
		\frac{1}{4}\underset{m,n\in\Z}{{\sum}}&\chi_{-4}(n)(8m+5n)q^{16m^2+16mn+5n^2}=\frac{f_{64}-f_{32}}{4}.
	\end{align*}
	According to \eqref{RasMahlermeasure} as well as to Table \ref{mahlermeasureandmodularforms}, we finally have
	\begin{align*}
		R&=4\left(\left(\frac{32}{\pi^2}\cdot\frac{L(f_{64},2)+L(f_{32},2)}{2}\right)^2\!-\left(\frac{64}{\pi^2}\cdot\frac{L(f_{64},2)-L(f_{32},2)}{4}\right)^2\right)\\[2pt]
		&=\frac{4096}{\pi^4}L(f_{64},2)L(f_{32},2)\\[2pt]
		&=\frac{4096}{\pi^4}L(E,2).
	\end{align*}
	Now, by a simplification, we can obtain \eqref{thecasek=12pm8sqrt2}.
\end{proof}

\section{The other four cases}\label{other4cases}

For the remaining four cases $k=\sqrt{2}\pm\sqrt{6},\ 4\sqrt{2}\pm4\sqrt{6},\ \frac{3\sqrt{2}}{2}\pm\frac{\sqrt{14}}{2},\ \text{and}\ 24\sqrt{2}\pm8\sqrt{14}$, there will be some trouble if we continue to use the rational transformation \eqref{rationaltransformation}. Since for instance when $k=\sqrt{2}+\sqrt{6},\ E_k$ has Weierstrass equation $Y^2=X^3+\sqrt{3}X^2+X$ under \eqref{rationaltransformation}. It is defined over $K=\Q(\sqrt{3})$. However, under this transformation, the functions $x=\frac{(\sqrt{2}+\sqrt{6})X-2Y}{2X(X-1)}$ and $y=\frac{(\sqrt{2}+\sqrt{6})X+2Y}{2X(X-1)}$ are not defined over $K$. It turns out that $\{x,y\}$ should not be an element of $K_2^T(E_k)$ by definition.

To settle this issue, we turn to 
the rational transformation
\begin{equation}\label{rationaltransformation2}
	x=\frac{kX-\sqrt{2}Y}{X(X-2)},\ \ \ \ \ y=\frac{kX+\sqrt{2}Y}{X(X-2)}.
\end{equation}
Under this transformation, we obtain elliptic curves
\begin{equation*}
	E'_k:Y^2=X^3+\left(\frac{k^2}{2}-4\right)X^2+4X.
\end{equation*}
And one can easily check that the functions $xy$ and $\frac{x}{y}$ are now defined over $K$ for our four interested pairs of $k$. So we will use
\begin{equation}\label{K2elements}
	M_1=\left\{xy,\frac{x}{y}\right\}\ \ \text{and}\ \  M_2=\left\{\phi^*(xy),\phi^*\left(\frac{x}{y}\right)\right\}.
\end{equation}
to compute the Beilinson regulators in this section.

\begin{proof}[Proof of \eqref{thecasek=sqrt2pmsqrt6}]
	Under \eqref{rationaltransformation2}, we have
	\begin{align*}
		E'_{\sqrt{2}+\sqrt{6}}&:Y^2=X^3+2\sqrt{3}X^2+4X,\\
		E'_{\sqrt{2}-\sqrt{6}}&:Y^2=X^3-2\sqrt{3}X^2+4X.
	\end{align*}
	They are defined over $K=\Q(\sqrt{3})$. Let $\sigma\in\Gal(K/\Q)$ be the nontrivial element. Then we can write $E$ for $E'_{\sqrt{2}+\sqrt{6}}$ and $E^{\sigma}$ for $E'_{\sqrt{2}-\sqrt{6}}$. There is an isogeny $\phi:E\to E^\sigma$  defined over $K$ of degree $3$, it is given by
	\[(X,Y)\mapsto\left(\frac{3X \left(X^2+4 \sqrt{3} X+12\right)}{\left(3 X+2 \sqrt{3}\right)^2},\frac{3 \sqrt{3}Y\left(X+2 \sqrt{3}\right) \left(X^2+4\right)}{\left(3 X+2\sqrt{3}\right)^3}\right).\]
	We have $\phi^\sigma\circ\phi=[-3]$ and $\phi^*(\omega_{E^\sigma})=\sqrt{3} \hspace{1pt}\omega_E$.
	
	Take
	\[\gamma_E=\left\{\left(x,y_1^E\right)|\theta\in[-\theta_1,\theta_1]\right\}\cup\left\{\left(x,y_2^E\right)|\theta\in[-\theta_1,\theta_1]\right\}\in H_1(E;\Z)^-\]
	and
	\[\gamma_{E^\sigma}\!=\!\left\{\left(x,y_1^{E^\sigma}\right)|\theta\in[\theta_2,2\pi-\theta_2]\right\}\!\cup\!\left\{\left(x,y_2^{E^\sigma}\right)|\theta\in[\theta_2,2\pi-\theta_2]\right\}\!\in\! H_1(E^\sigma;\Z)^-,\]
	where
	\[\theta_1=\pi-\arctan\sqrt{\frac{\left(\sqrt{2}-1\right)\left(\sqrt{3}-1\right)}{2}},\ \  \theta_2=\pi-\arctan\sqrt{\frac{\left(\sqrt{2}+1\right)\left(\sqrt{3}+1\right)}{2}}.\]
	By calculating the integral of invariant differential, we know that $\gamma_E$ and $\gamma_{E^\sigma}$ are generators of $H_1(X;\Z)^-$ and 
	\begin{equation}\label{path}
		\phi_*\gamma_E=\pm3\gamma_{E^\sigma},\ \ \ (\phi^\sigma)_*\gamma_{E^{\sigma}}=\mp\gamma_E.
	\end{equation}

	Next, take $M_1$ and $M_2$ as \eqref{K2elements}. In this setting, we have
	\begin{align*}
		\langle\gamma_E,M_1\rangle&=\frac{1}{2\pi}\int_{\gamma_E}\eta\left(xy,\frac{x}{y}\right)\nonumber\\
		&=\frac{1}{2\pi}\int_{\gamma_E}\log|xy|\im\left(\frac{d\frac{x}{y}}{\frac{x}{y}}\right)-\log\left|\frac{x}{y}\right|\im\left(\frac{dxy}{xy}\right)\\
		&=\frac{1}{2\pi}\int_{\gamma_E}\log|y|\im\left(-\frac{dy}{y}+\frac{dx}{x}\right)+\log\left|y\right|\im\left(\frac{xdy+ydx}{xy}\right)\\
		&=\frac{1}{\pi}\int_{\gamma_E}\log|y|\im\left(\frac{dx}{x}\right)\\
		&=\pm4m\left(\!\sqrt{2}+\sqrt{6}\right).
	\end{align*}
	Similarly, we have $\langle\gamma_{E^\sigma},M_1\rangle=\pm4m\left(\!\sqrt{2}-\sqrt{6}\hspace{1pt}\right)$. Also, according to \eqref{path} and the calculations in the previous section, we have
	\[\langle\gamma_E,M_2\rangle=\pm12m\left(\!\sqrt{2}-\sqrt{6}\right),\ \ \ \ \langle\gamma_{E^\sigma},M_2\rangle=\mp4m\left(\!\sqrt{2}+\sqrt{6}\right).\]

	We search in \cite{database} and find that $E$ is isomorphic over $K$ to
	the elliptic curve \href{https://www.lmfdb.org/EllipticCurve/2.2.12.1/16.1/a/1}{\texttt{2\!.\!2\!.\!12\!.\!1-16\!.\!1-a1}}, its $L$-function
	\href{https://www.lmfdb.org/L/4/48e2/1.1/c1e2/0/2}{\texttt{4-48e2-1\!.\!1-c1e2-0-2}} can be written as
	\[L(E,s)=L(f_{48},s)L(g_{48},s),\]
	where $f_{48}$ is the newform \href{https://www.lmfdb.org/ModularForm/GL2/Q/holomorphic/48/2/c/a/47/1/}{\texttt{48\!.\!2\!.\!c\!.\!a\!.\!47\!.\!1}} and $g_{48}$ is its dual form \href{https://www.lmfdb.org/ModularForm/GL2/Q/holomorphic/48/2/c/a/47/2/}{\texttt{48\!.\!2\!.\!c\!.\!a\!.\!47\!.\!2}}. One can use Sturm bound to prove that
	\begin{align*}
		-\frac{1}{6}\underset{m,n\in\Z}{{\sum}}\chi_{-4}(n)(m-2n)q^{m^2-mn+n^2}&=\frac{3+ i\sqrt{3}}{6}f_{48}+\frac{3- i\sqrt{3}}{6}g_{48},\\
		-\frac{1}{2}\underset{m,n\in\Z}{{\sum}}\chi_{-4}(n)(3m-2n)q^{3m^2-3mn+n^2}&=\frac{1- i\sqrt{3}}{2}f_{48}+\frac{1+ i\sqrt{3}}{2}g_{48}.
	\end{align*}
	Hence, we have
	\begin{align*}
		R&=\left|\det{\begin{pmatrix}\langle\gamma_E,M_1\rangle&\langle\gamma_{E^{\sigma}},M_1\rangle\\\langle\gamma_E,M_2\rangle&\langle\gamma_{E^\sigma},M_2\rangle\end{pmatrix}}\right|\\
		&=16m\left(\!\sqrt{2}+\sqrt{6}\right)^2+48m\left(\!\sqrt{2}-\sqrt{6}\right)^2\\
		&=16\left(\frac{6\sqrt{3}}{\pi^2}\cdot\left(\frac{3+ i\sqrt{3}}{6}L(f_{48},2)+\frac{3- i\sqrt{3}}{6}L(g_{48},2)\right)\right)^2\\
		&+48\left(\frac{2\sqrt{3}}{\pi^2}\cdot\left(\frac{1- i\sqrt{3}}{2}L(f_{48},2)+\frac{1+ i\sqrt{3}}{2}L(g_{48},2)\right)\right)^2\\
		&=\frac{2304}{\pi^4}L(f_{48},2)L(g_{48},2)\\[2pt]
		&=\frac{2304}{\pi^4}L(E,2).
	\end{align*}
	A simplification immediately yields \eqref{thecasek=sqrt2pmsqrt6}. 
\end{proof}

\begin{proof}[Proof of \eqref{thecasek=4sqrt2pm4sqrt6}]
	Under \eqref{rationaltransformation2}, we have
	\begin{align*}
		E'_{4\sqrt{2}+4\sqrt{6}}&:Y^2=X^3+(60+32\sqrt{3})X^2+4X,\\
		E'_{4\sqrt{2}-4\sqrt{6}}&:Y^2=X^3+(60-32\sqrt{3})X^2+4X.
	\end{align*}
	They are defined over $K=\Q(\sqrt{3})$. Let $\sigma\in\Gal(K/\Q)$ be the nontrivial element. Then we can write $E$ for $E'_{4\sqrt{2}+4\sqrt{6}}$ and $E^{\sigma}$ for $E'_{4\sqrt{2}-4\sqrt{6}}$. There is an isogeny $\phi:E\to E^\sigma$  defined over $K$ of degree $3$, it is given by $(X,Y)\mapsto\left(\phi_1(X),Y\phi_2(X)\right),$ where
	\begin{align*}
		\phi_1(X)&=\frac{3X\left(\left(7-4\sqrt{3}\right)X^2+\left(12-8\sqrt{3}\right)X+12\right)}{\left(3X+6-4\sqrt{3}\right)^2},\\
		\phi_2(X)&=\frac{\left(2 \sqrt{3}-3\right)^3\left(X-6-4 \sqrt{3}\right) \left(X^2+12 X+4\right)}{\left(3 X+6-4 \sqrt{3}\right)^3}.
	\end{align*}
	We have $\phi^\sigma\circ\phi=[-3]$ and $\phi^*(\omega_{E^\sigma})=(3+2\sqrt{3})\omega_E$.
	
	Take
	\[\gamma_E=\left\{\left(x,y_1^E\right)|\theta\in[-\pi,\pi]\right\}\in H_1(E;\Z)^-\]
	and
	\[\gamma_{E^\sigma}=\left\{\left(x,y_1^{E^\sigma}\right)|\theta\in[-\pi,\pi]\right\}\in H_1(E^\sigma;\Z)^-.\]
	By calculating the integral of invariant differential, we know that $\gamma_E$ and $\gamma_{E^\sigma}$ are generators of $H_1(X;\Z)^-$ and
	\[\phi_*\gamma_E=\mp\gamma_{E^\sigma},\ \ \ (\phi^\sigma)_*\gamma_{E^{\sigma}}=\pm3\gamma_E.\]
	
	Next, take $M_1$ and $M_2$ as \eqref{K2elements}. We have
	\begin{align*}
		\langle\gamma_E,M_1\rangle&=\pm2m\left(\!4\sqrt{2}+4\sqrt{6}\right),\ \ \ \ \langle\gamma_{E^\sigma},M_1\rangle=\pm2m\left(\!4\sqrt{2}-4\sqrt{6}\right),\\
		\langle\gamma_E,M_2\rangle&=\mp2m\left(\!4\sqrt{2}-4\sqrt{6}\right),\ \ \ \ \langle\gamma_{E^\sigma},M_2\rangle=\pm6m\left(\!4\sqrt{2}+4\sqrt{6}\right).
	\end{align*}
	
	We search in \cite{database} and find that $E$ is isomorphic over $K$ to
	the elliptic curve \href{https://www.lmfdb.org/EllipticCurve/2.2.12.1/256.1/c/8}{\texttt{2\!.\!2\!.\!12\!.\!1-256\!.\!1-c8}}, its $L$-function
	\href{https://www.lmfdb.org/L/4/192e2/1.1/c1e2/0/1}{\texttt{4-192e2-1\!.\!1-c1e2-0-1}} can be written as
	\[L(E,s)=L(f_{192},s)L(g_{192},s),\]
	where $f_{192}$ is the newform \href{https://www.lmfdb.org/ModularForm/GL2/Q/holomorphic/192/2/c/a/191/1/}{\texttt{192\!.\!2\!.\!c\!.\!a\!.\!191\!.\!1}} and $g_{192}$ is its dual form \href{https://www.lmfdb.org/ModularForm/GL2/Q/holomorphic/192/2/c/a/191/2/}{\texttt{192\!.\!2\!.\!c\!.\!a\!.\!191\!.\!2}}. One can use Sturm bound to prove that
	\begin{align*}
		\frac{1}{2}\underset{m,n\in\Z}{{\sum}}\chi_{-4}(n)nq^{12m^2+n^2}&=\frac{f_{192}+g_{192}}{2},\\
		\frac{1}{2}\underset{m,n\in\Z}{{\sum}}\chi_{-4}(n)nq^{4m^2+3n^2}&=\frac{g_{192}-f_{192}}{2 i\sqrt{3}}.
	\end{align*}
	Hence, we have
	\begin{align*}
		R&=12m\left(\!4\sqrt{2}+4\sqrt{6}\right)^2+4m\left(\!4\sqrt{2}+4\sqrt{6}\right)^2\\
		&=12\left(\!\frac{16\sqrt{3}}{\pi^2}\cdot\frac{L(f_{192},2)+L(g_{192},2)}{2}\right)^2\!+4\left(\!\frac{48\sqrt{3}}{\pi^2}\cdot\frac{L(g_{192},2)-L(f_{192},2)}{2 i\sqrt{3}}\right)^2\\
		&=\frac{9216}{\pi^4}L(f_{192},2)L(g_{192},2)\\[2pt]
		&=\frac{9216}{\pi^4}L(E,2).
	\end{align*}
	A simplification immediately yields \eqref{thecasek=4sqrt2pm4sqrt6}.
\end{proof}


\begin{proof}[Proof of \eqref{thecasek=3sqrt2/2pmsqrt14/2}]
	Under \eqref{rationaltransformation2}, we have
	\begin{align*}
		E'_{\frac{3\sqrt{2}}{2}+\frac{\sqrt{14}}{2}}&:Y^2=X^3+\frac{3\sqrt{7}}{2}X^2+4X,\\
		E'_{\frac{3\sqrt{2}}{2}-\frac{\sqrt{14}}{2}}&:Y^2=X^3-\frac{3\sqrt{7}}{2}X^2+4X.
	\end{align*}
	They are defined over $K=\Q(\sqrt{7})$. Let $\sigma\in\Gal(K/\Q)$ be the nontrivial element. Then we can write $E$ for $E'_{\frac{3\sqrt{2}}{2}+\frac{\sqrt{14}}{2}}$ and $E^{\sigma}$ for $E'_{\frac{3\sqrt{2}}{2}-\frac{\sqrt{14}}{2}}$. There is an isogeny $\phi:E\to E^\sigma$  defined over $K$ of degree $7$, it is given by $(X,Y)\mapsto\left(\frac{7X\phi_2(X)}{\phi_1(X)^2},\frac{7\sqrt{7}Y\phi_3(X)}{\phi_1(X)^3}\right),$ where
	\begin{align*}
		\phi_1(X)&=7X^3+14\sqrt{7}X^2+56X+8\sqrt{7},\\
		\phi_2(X)&=X^6+8 \sqrt{7} X^5+168 X^4+240 \sqrt{7} X^3+1232 X^2+448 \sqrt{7}X+448,\\
		\phi_3(X)&=X^9+6 \sqrt{7} X^8+96 X^7+120 \sqrt{7} X^6+720 X^5+544 \sqrt{7} X^4+2752 X^3\\
		&+1536 \sqrt{7} X^2+3584 X+512 \sqrt{7}.
	\end{align*}
	We have $\phi^\sigma\circ\phi=[-7]$ and $\phi^*(\omega_{E^\sigma})=\sqrt{7} \hspace{1pt}\omega_E$.
	
	Take
	\[	\gamma_E=\left\{\left(x,y_1^E\right)|\theta\in[-\theta_1,\theta_1]\right\}\cup\left\{\left(x,y_2^E\right)|\theta\in[-\theta_1,\theta_1]\right\}\in H_1(E;\Z)^-\]
	and
	\[\gamma_{E^\sigma}=\left\{\left(x,y_1^{E^\sigma}\right)|\theta\in[-\theta_2,\theta_2]\right\}\cup\left\{\left(x,y_2^{E^\sigma}\right)|\theta\in[-\theta_2,\theta_2]\right\}\in H_1(E^\sigma;\Z)^-,\]
	where
	\begin{align*}
		\theta_1&=\pi-\arctan\frac{\sqrt{552\sqrt{2}-433-4\sqrt{7 \left(2993-1428 \sqrt{2}\hspace{1pt}\right)}}}{47},\\[5pt]
		\theta_2&=\arctan\frac{\sqrt{552\sqrt{2}-433+4\sqrt{7 \left(2993-1428 \sqrt{2}\hspace{1pt}\right)}}}{47}.
	\end{align*}
	By calculating the integral of invariant differential, we know that $\gamma_E$ and $\gamma_{E^\sigma}$ are generators of $H_1(X;\Z)^-$ and
	\[\phi_*\gamma_E=\pm7\gamma_{E^\sigma},\ \ \ (\phi^\sigma)_*\gamma_{E^{\sigma}}=\mp\gamma_E.\]
	
	Next, take $M_1$ and $M_2$ as \eqref{K2elements}. We have
	\begin{align*}
		\langle\gamma_E,M_1\rangle&=\pm4m\left(\!\frac{3\sqrt{2}}{2}+\frac{\sqrt{14}}{2}\right),\ \ \ \ \langle\gamma_{E^\sigma},M_1\rangle=\pm4m\left(\!\frac{3\sqrt{2}}{2}-\frac{\sqrt{14}}{2}\right),\\
		\langle\gamma_E,M_2\rangle&=\pm28m\left(\!\frac{3\sqrt{2}}{2}-\frac{\sqrt{14}}{2}\right),\ \ \ \ \langle\gamma_{E^\sigma},M_2\rangle=\mp4m\left(\!\frac{3\sqrt{2}}{2}+\frac{\sqrt{14}}{2}\right).
	\end{align*}
	
	We search in \cite{database} and find that $E$ is isomorphic over $K$ to
	the elliptic curve \href{https://www.lmfdb.org/EllipticCurve/2.2.28.1/1.1/a/2}{\texttt{2\!.\!2\!.\!28\!.\!1-1\!.\!1-a2}}, its $L$-function
	\href{https://www.lmfdb.org/L/4/28e2/1.1/c1e2/0/1}{\texttt{4-28e2-1\!.\!1-c1e2-0-1}} can be written as
	\[L(E,s)=L(f_{28},s)L(g_{28},s),\]
	where $f_{28}$ is the newform \href{https://www.lmfdb.org/ModularForm/GL2/Q/holomorphic/28/2/d/a/27/1/}{\texttt{28\!.\!2\!.\!d\!.\!a\!.\!27\!.\!1}} and $g_{28}$ is its dual form \href{https://www.lmfdb.org/ModularForm/GL2/Q/holomorphic/28/2/d/a/27/2/}{\texttt{28\!.\!2\!.\!d\!.\!a\!.\!27\!.\!2}}.
	One can use Sturm bound to prove that
	{\small
		\begin{align*}
			-\frac{1}{14}\underset{m,n\in\Z}{{\sum}}\chi_{-4}(n)(m\!-\!4n)q^{m^2-mn+2n^2}&\!=\alpha_1 f_{28}(2\tau)\!+\!\bar{\alpha}_1g_{28}(2\tau)\!+\!\beta_1 f_{28}(4\tau)\!+\!\bar{\beta}_1g_{28}(4\tau),\\
			-\frac{1}{2}\underset{m,n\in\Z}{{\sum}}\chi_{-4}(n)(7m\!-\!4n)q^{7m^2-7mn+2n^2}&\!=\alpha_2 f_{28}(2\tau)\!+\!\bar{\alpha}_2g_{28}(2\tau)\!+\!\beta_2 f_{28}(4\tau)\!+\!\bar{\beta}_2g_{28}(4\tau),
		\end{align*}
	}
	where $\alpha_1=\frac{7- i\sqrt{7}}{14},\beta_1=\frac{7+3 i\sqrt{7}}{7},\alpha_2=\frac{1+ i\sqrt{7}}{2},\beta_2=-3- i\sqrt{7}.$ Hence, we have
	{\small
		\begin{align*}
			R&=16m\left(\frac{3\sqrt{2}}{2}+\frac{\sqrt{14}}{2}\right)^2+112m\left(\frac{3\sqrt{2}}{2}-\frac{\sqrt{14}}{2}\right)^2\\
			&=\!16\!\left(\!\frac{14\sqrt{7}}{\pi^2}\!\left(\alpha_1L(f_{28}(2\tau),2)\!+\!\bar\alpha_1L(g_{28}(2\tau),2)\!+\!\beta_1L(f_{28}(4\tau),2)\!+\!\bar\beta_1L(g_{28}(4\tau),2)\right)\!\right)^2\\
			&+\!112\!\left(\!\frac{2\sqrt{7}}{\pi^2}\!\left(\alpha_2L(f_{28}(2\tau),2)\!+\!\bar\alpha_2L(g_{28}(2\tau),2)\!+\!\beta_2L(f_{28}(4\tau),2)\!+\!\bar\beta_2L(g_{28}(4\tau),2)\right)\!\right)^2\\
			&=\frac{21952}{\pi^4}\left(\left(\frac{\alpha_1}{2^2}+\frac{\beta_1}{4^2}\right)L(f_{28},2)+\left(\frac{\bar\alpha_1}{2^2}+\frac{\bar\beta_1}{4^2}\right)L(g_{28},2)\right)^2\\
			&+\frac{3136}{\pi^4}\left(\left(\frac{\alpha_2}{2^2}+\frac{\beta_2}{4^2}\right)L(f_{28},2)+\left(\frac{\bar\alpha_2}{2^2}+\frac{\bar\beta_2}{4^2}\right)L(g_{28},2)\right)^2\\
			&=\frac{3136}{\pi^4}L(f_{28},2)L(g_{28},2)\\
			&=\frac{3136}{\pi^4}L(E,2).
		\end{align*}
	}
	A simplification immediately yields \eqref{thecasek=3sqrt2/2pmsqrt14/2}.
\end{proof}


\begin{proof}[Proof of \eqref{thecasek=24sqrt2pm8sqrt14}]
	Under \eqref{rationaltransformation2}, we have
	\begin{align*}
		E'_{24\sqrt{2}+8\sqrt{14}}&:Y^2=X^3+(1020+384\sqrt{7})X^2+4X,\\
		E'_{24\sqrt{2}-8\sqrt{14}}&:Y^2=X^3+(1020-384\sqrt{7})X^2+4X.
	\end{align*}
	They are defined over $K=\Q(\sqrt{7})$. Let $\sigma\in\Gal(K/\Q)$ be the nontrivial element. Then we can write $E$ for $E'_{24\sqrt{2}+8\sqrt{14}}$ and $E^{\sigma}$ for $E'_{24\sqrt{2}-8\sqrt{14}}$. There is an isogeny $\phi:E\to E^\sigma$  defined over $K$ of degree $7$, it is given by $(X,Y)\mapsto\left(\frac{7X\phi_2(X)}{\phi_1(X)^2},\frac{-(21-8\sqrt{7})^3Y\phi_3(X)}{\phi_1(X)^3}\right),$ where
	\begin{align*}
		\phi_1(X)&=7X^3-14\left(15+8\sqrt{7}\right)X^2+308X+8\left(21-8\sqrt{7}\right),\\
		\phi_2(X)&=\left(127-48\sqrt{7}\right)X^6+44\left(21-8\sqrt{7}\right)X^5+12\left(371-32\sqrt{7}\right)X^4\\
		&-\!96\!\left(189\!+\!104\sqrt{7}\right)\!X^3\!+\!560\!\left(139\!+\!48\sqrt{7}\right)\!X^2\!-\!448\!\left(15\!+\!8\sqrt{7}\right)\!X\!+\!448,\\
		\phi_3(X)&\!=\!X^9\!-\!6\!\left(15\!+\!8\sqrt{7}\right)\!X^8\!-\!48\!\left(6289\!+\!2380\sqrt{7}\right)\!X^7\!-\!96\!\left(212583\!+\!80356 \sqrt{7}\right)\!X^6\\
		&+288\left(2959+1112 \sqrt{7}\right) X^5+64\left(13721325+5186128 \sqrt{7}\right) X^4\\
		&+256 \left(2104321+795348 \sqrt{7}\right) X^3+1536 \left(1029+388 \sqrt{7}\right) X^2\\
		&-256\left(6727+2544 \sqrt{7}\right) X-512\left(21+8\sqrt{7}\right).
	\end{align*}
	We have $\phi^\sigma\circ\phi=[-7]$ and $\phi^*(\omega_{E^\sigma})=(21+8 \sqrt{7})\omega_E$.
	
	Take
	\[\gamma_E=\left\{\left(x,y_1^E\right)|\theta\in[-\pi,\pi]\right\}\in H_1(E;\Z)^-\]
	and
	\[\gamma_{E^\sigma}=\left\{\left(x,y_1^{E^\sigma}\right)|\theta\in[-\pi,\pi]\right\}\in H_1(E^\sigma;\Z)^-.\]
	By calculating the integral of invariant differential, we know that $\gamma_E$ and $\gamma_{E^\sigma}$ are generators of $H_1(X;\Z)^-$ and
	\[\phi_*\gamma_E=\pm\gamma_{E^\sigma},\ \ \ (\phi^\sigma)_*\gamma_{E^{\sigma}}=\mp7\gamma_E.\]
	
	Next, take $M_1$ and $M_2$ as \eqref{K2elements}. We have
	\begin{align*}
		\langle\gamma_E,M_1\rangle&=\pm2m\left(\!24\sqrt{2}+8\sqrt{14}\right),\ \ \ \ \langle\gamma_{E^\sigma},M_1\rangle=\pm2m\left(\!24\sqrt{2}-8\sqrt{14}\right),\\
		\langle\gamma_E,M_2\rangle&=\pm2m\left(\!24\sqrt{2}-8\sqrt{14}\right),\ \ \ \ \langle\gamma_{E^\sigma},M_2\rangle=\mp14m\left(\!24\sqrt{2}+8\sqrt{14}\right),
	\end{align*}
	
	We search in \cite{database} and find that $E$ is isomorphic over $K$ to
	the elliptic curve \href{https://www.lmfdb.org/EllipticCurve/2.2.28.1/256.1/j/8}{\texttt{2\!.\!2\!.\!28\!.\!1-256\!.\!1-j8}}, its $L$-function
	\href{https://www.lmfdb.org/L/4/448e2/1.1/c1e2/0/0}{\texttt{4-448e2-1\!.\!1-c1e2-0-0}} can be written as
	\[L(E,s)=L(f_{448},s)L(g_{448},s),\]
	where $f_{448}$ is the newform \href{https://www.lmfdb.org/ModularForm/GL2/Q/holomorphic/448/2/f/b/447/1/}{\texttt{448\!.\!2\!.\!f\!.\!b\!.\!447\!.\!1}} and $g_{448}$ is its dual form \href{https://www.lmfdb.org/ModularForm/GL2/Q/holomorphic/448/2/f/b/447/2/}{\texttt{448\!.\!2\!.\!f\!.\!b\!.\!447\!.\!2}}. One can use Sturm bound to prove that
	\begin{align*}
		\frac{1}{2}\underset{m,n\in\Z}{{\sum}}\chi_{-4}(n)nq^{28m^2+n^2}&=\frac{f_{448}+g_{448}}{2},\\
		\frac{1}{2}\underset{m,n\in\Z}{{\sum}}\chi_{-4}(n)nq^{4m^2+7n^2}&=\frac{g_{448}-f_{448}}{2 i\sqrt{7}}.
	\end{align*}
	Hence, we have 
	\begin{align*}
		R&=28m\left(24\sqrt{2}+8\sqrt{14}\right)^2+4m\left(24\sqrt{2}-8\sqrt{14}\right)^2\\
		&=28\!\left(\!\frac{16\sqrt{7}}{\pi^2}\cdot\frac{L(f_{448},2)+L(g_{448},2)}{2}\!\right)^2\!+4\!\left(\!\frac{112\sqrt{7}}{\pi^2}\cdot\frac{L(g_{448},2)-L(f_{448},2)}{2 i\sqrt{7}}\!\right)^2\\
		&=\frac{50176}{\pi^4}L(f_{448},2)L(g_{448},2)\\[2pt]
		&=\frac{50176}{\pi^4}L(E,2).
	\end{align*}
	A simplification immediately yields \eqref{thecasek=24sqrt2pm8sqrt14}.
\end{proof}

\end{document}